\newcommand{\downineq}{\rotatebox{90}{$\leq $}}
\newcommand{\verteq}{\rotatebox{90}{$= $}}
\numberwithin{equation}{section}
\newcommand{\R}{{\mathbb R}} 
\def \r {{\mathbf r}}
\def \e {\varepsilon}
\def \p {\partial}
\def \e {\mathbf e}
\def\t{\mathcal T}
\def\p{\rho}
\def\m{\bar \Omega}
\def\M{\bar \Omega^{*}}
\def\r{\mathcal R}
\def\n{\mathcal{N}}
\def\b{\textbf{b}}
\theoremstyle{plain}
\newtheorem{theorem}{Theorem}[section]
\newtheorem{lemma}[theorem]{Lemma}
\newtheorem{proposition}[theorem]{Proposition}
\newtheorem{definition}[theorem]{Definition}
\newtheorem{remark}[theorem]{Remark}
\providecommand{\bysame}{\makebox[3em]{\hrulefill}\thinspace}
\begin{document}

\title{On the Numerical Solution of the Far Field Refractor Problem}
\author{Roberto De Leo, Cristian E. Guti\'errez and Henok Mawi} 
\thanks{\today\\
C. E. G was partially supported
by NSF grant DMS--1201401.\\The numerical part in Section \ref{sec:numericalanalysis} was developed first on the computational cluster of the Italian National Institute for Nuclear Physics (INFN), and lately on the cluster of the College of Arts and Sciences at Howard University.} 
\address{Department of Mathematics\\Howard University\\Washington, D.C. 20059}
\email{roberto.deleo@howard.edu, henok.mawi@howard.edu}
\address{Department of Mathematics\\Temple University\\Philadelphia, PA 19122}
\email{gutierre@temple.edu}

\maketitle 

\begin{abstract}
The far field refractor problem with a discrete target is solved with a numerical scheme that uses and simplify ideas from \cite{Caf-K-Oli:antenna}.
A numerical implementation is carried out and examples are shown.
\end{abstract}

\section{Introduction}
The purpose of this paper is to present an algorithm to construct far field one source refractors with arbitrary precision.
We use the ideas from the paper \cite{Caf-K-Oli:antenna} by Caffarelli, Kochengin and Oliker, where they develop an algorithm to construct far field point source global reflectors, i.e., the source domain $\Omega$ is the whole sphere $S^2$, and the density is smooth.
For our refraction problem, we are able to simplify and extend these ideas to deal with densities that are only bounded and work in general domains. In particular, we do not need to consider derivatives of the refractor measure, we only need to prove an appropriate Lipschitz bound for the refractor measure which considerably simplifies the approach proposed in  \cite{Caf-K-Oli:antenna}. 
In addition, our approach does not use the mass transport structure of the
far field problem, and therefore it can be used in near field problems.
Since we are working in general domains $\Omega$ and with a non smooth density, the differentiability of the refractor measure might not hold in general. This depends on the shape and regularity of the domain and the smoothness of the density. 
The nature of refraction problems demands for domains for which total internal reflection does not occur, see condition \eqref{geometricconstraintfork<1}. Therefore the global problem does not make sense in this case.

To place our results in perspective we mention the following. Recently, Castro, M\'erigot and Thibert 
\cite{Castro2015}
introduced numerical methods to solve the reflector problem. These are based on optimal transport ideas introducing a concave function arising from the Kantorovitch functional. This function is analyzed numerically and their results follow, combined with other numerical packages. An advantage of this approach is that the convergence of their algorithm is faster than the one proposed in \cite{Caf-K-Oli:antenna}.  For general cost functions satisfying the Ma, Trudinger and Wang condition arising in optimal transport \cite{MaTrudingerWang:regularityofpotentials}, the algorithm in \cite{Caf-K-Oli:antenna} is extended in \cite{kitagawa:iterativeschemeoptimaltransport} when the density is $C^\infty$ and the domains are convex with respect to the cost function. 
We remark that this does not include our results when the density is smooth, since the refractor considered in the present paper is for $\kappa<1$ and the condition of Ma, Trudinger and Wang does not hold in this case; see \cite[Section 5]{gutierrez-huang:farfieldrefractor}. 
We believe the case $\kappa<1$ is more interesting for lens design since lenses are made of materials that are denser than the surrounding medium. In fact, if the material around the source is cut out with sphere centered at the source, then the lens sandwiched  between that sphere and the constructed refractor surface will perform the desired refracting job.

The far field refractor problem has been considered and solved for the first time in \cite{gutierrez-huang:farfieldrefractor} using optimal mass transport. Several models and variants have been introduced to reflect more accurately the physical features of the problem; see
\cite{gutierrez-mawi:refractorwithlossofenergy}, \cite{gutierrez-tournier:parallelrefractor}, \cite{gutierrez-tournier:REGULARITYFORTHENEARFIELDPARALLELREFRACTORANDREFLECTORPROBLEMS}, and \cite{gutierrez-sabra:thereflectorproblemandtheinversesquarelaw}.
For numerical results to design reflectors solving Monge-Amp\`ere type pdes we refer to \cite{MAforreflectorsnumericalmethods} and 
\cite{platen:MAforREFRACTORSnumericalmethods} both containing many references. 

The organization of the paper is as follows. In Section \ref{sec:preliminaries}, we explain the set up and the problem solved. In Section \ref{subset:lemmasfortracingmapandrefractormeasures} we prove lemmas concerning the tracing map and the refractor measure to be used in solving the problem. Section \ref{subsec:geodesicdisks} contains a few results about geodesic disks that are needed in the proof of the Lipschitz estimates.
The algorithm is explained in detail in Section \ref{sec:algorithm}, and the convergence in a finite number of steps in Section \ref{sec:lipschitzestimateimpliesstop}.
Section \ref{sec:lipschitzestimates} contains the Lipschitz estimate in Proposition \ref{prop:estimateofGiisimpler} needed to show the convergence of the algorithm in a finite number of steps.
Finally, in Section \ref{sec:numericalanalysis} we give a numerical implementation of our algorithm to construct various examples.


\section{Set up, definitions, and statement of results}\label{sec:preliminaries}

Suppose $\Gamma$ is a surface in $\R^3$ that separates two homogeneous, isotropic and dielectric media I and II having refractive indices $n_1$ and $n_2$, respectively. 
If a ray of light having direction $x \in S^2,$ the unit sphere in $\R^3,$ and traveling through the medium I strikes $\Gamma$ at the point P, then this ray is refracted in the direction $m \in S^2$ through the medium II according to the law of refraction (Snell's Law) 
\begin{equation}\label{vectorformofsnellslaw}
n_1(x \times \nu) =  n_2(m \times \nu),
\end{equation}
where $\nu$ is the unit normal to $\Gamma$ at $P$ pointing towards medium II.
%
If we set $\kappa = n_2/n_1$, then we can also write \eqref{vectorformofsnellslaw} as
\begin{equation}\label{vectorformofsnellslawwithlambda}
x - \kappa m = \lambda \nu
\end{equation}
where $\lambda \in \R$ is given by
$
\lambda = x \cdot \nu - m \cdot \nu
= x \cdot \nu-\kappa \sqrt{1-\kappa^{-2}(1-(x \cdot \nu)^{2})}$.
When medium I is optically denser than medium II, that is, $\kappa <1,$ 
the vector $m$ bends away from the normal, and total internal reflection might occur. 
That is, the ray with direction $m$ is transmitted to medium II if and only if $x\cdot m\geq \kappa$, or equivalently $x\cdot \nu\geq \sqrt{1-\kappa^2}$; see \cite[Section 2.1]{gutierrez-huang:farfieldrefractor}.
When $\kappa<1$, the surfaces having the uniform refracting property, are ellipsoids of revolution having a focus at the origin, see \cite[Section 2.2]{gutierrez-huang:farfieldrefractor}. That is, the surface written in polar coordinates $\rho(x)x$ with $x\in S^2$ and with
\begin{equation} \label{asurfacewithuniformrefraction}
\rho(x) = \dfrac{b}{1 - \kappa m \cdot x},
\end{equation}
$b>0$, is an ellipsoid of revolution with axis $m$, eccentricity $\kappa$, foci $0$ and $\dfrac{2\kappa b}{1 - \kappa^2}m$, and refracts all rays emanating from $0$ into the direction $m$ for $x\cdot m\geq \kappa$.
We then denote this semi-ellipsoid by
\begin{equation}\label{polarformofsemiellipsoid}
E(m,b) = \left\{\rho(x)x:\rho(x) = \dfrac{b}{1-\kappa m \cdot x},\  x \in S^2,\ m \cdot x \geq \kappa\right\}.
\end{equation}
{\it We assume throughout the paper that medium I is denser than medium II and therefore $\kappa = n_2/n_1 < 1.$} We also point out that similar analysis can be done for the case $\kappa = n_2/n_1 > 1$, changing ellipsoids for hyperboloids, see \cite[Section 2.2]{gutierrez-huang:farfieldrefractor}.

Suppose that $ \Omega$ and $ \Omega^*$ are two domains of the unit sphere $ S^2$ of $\R^3$\footnote{The physical problem considered is three dimensional; the mathematical extension to $n$ dimensions is straightforward.} with the property, to avoid total reflection \cite[Sect. 1.5.4]{book:born-wolf}, that
\begin{equation}\label{geometricconstraintfork<1}
\inf_{m \in \bar\Omega^*, x \in \bar \Omega} m \cdot x \geq \kappa,
\end{equation}
where $m \cdot x$ is the usual inner product of $m$ and $x$ in $\R^3$; and the boundary of $\Omega$ has surface measure zero.


\begin{definition}
A surface $\mathcal{R}$ in $\R^3$ parameterized by $\rho(x)x$ 
is a refractor from $\bar \Omega$ to $\bar \Omega^* $ if for any $x_o \in \bar{\Omega}$ there exists a semi-ellipsoid $E(m,b)$ with $m \in \bar \Omega^*$ such that $\rho(x_o)= \dfrac{b}{1-\kappa m \cdot x_o}$ and $\rho(x) \leq \dfrac{b}{1-\kappa m \cdot x}$ for all $x \in \bar{\Omega}.$ We call $E(m,b)$ a supporting semi-ellipsoid to $\mathcal R$ at $\rho(x_o)x_o$ or simply at $x_o.$
\end{definition}
From the definition, it is easy to see that refractors are Lipschitz continuous in $\bar \Omega$, i.e., $|\rho(x)-\rho(y)|\leq C_\kappa\,\left(\inf_{\Omega}\rho\right)\,|x-y|$ for $x,y,\in \bar \Omega$ with $C_\kappa$ a constant depending only on $\kappa$.
\begin{definition}
Given a refractor $\mathcal R = \{\rho(x)x: x \in \bar \Omega\},$ the refractor mapping of $\mathcal R$ is the multi-valued map defined for $x_{o} \in \bar \Omega$ by
$$
\n_{\r}(x_{o})=\{m \in \M : E(m,b)\  supports \ \r \ at \p(x_{o})x_{o}\ for \ some \ b>0\}.
$$
Given $m_{o} \in \bar \Omega^*$ the tracing mapping of $\r$ is defined by
\[
\t_{\r}(m_{o})=\{x \in \m: m_{o} \in \n_{\r}(x)\}.
\]
\end{definition}

Suppose that we are given a nonnegative function $g \in L^1(\bar\Omega)$.
We then recall the notion of refractor measure, see \cite[Section 3.1]{gutierrez-huang:farfieldrefractor}.
%
%
%
\begin{definition}\label{sol1}
The refractor measure associated with the refractor $\mathcal R$ and the function $g$ is the Borel measure given by 
\[
G_{\mathcal R}(\omega)= \int_{\t_{\mathcal R}(\omega)}g(x)\,dx
\]
for every Borel subset $\omega$ of $\M.$
\end{definition}
Given a Borel measure $\mu$ in $\Omega^*$ satisfying the energy conservation condition $\int_\Omega g(x)\,dx=\mu(\Omega^*)$, the {\it far field refractor problem} consists in finding a refractor $\r$ from $\Omega$ to $\Omega^*$ such that 
$G_\r=\mu$ in $\Omega^*$. Existence of refractors and uniqueness up to dilations is proved in \cite{gutierrez-huang:farfieldrefractor} using mass transport techniques. This is also proved in \cite{gutierrez-mawi:refractorwithlossofenergy} with a different method where a more general case that takes into account internal reflection is considered.

For the remaining part of the discussion fix $m_1, m_2, \ldots, m_N,$  $N \geq 2$, distinct points in $\M\subset S^2.$
Given $\b = (b_1, \ldots, b_N) \in \R_+^N$, i.e., with each $b_i > 0,$ we denote by $\r(\b)$ the refractor defined by a finite number of semi-ellipsoids and given by
\begin{equation}\label{refractordefinedbyfiniteellipsoids}
\r(\b) = \left\{\p(x)x: x \in \m, \p(x) = \min_{1 \leq i \leq N} \dfrac{b_i}{1-\kappa m_i \cdot x}\right\}.
\end{equation}
In this setting, we recall the following theorem from \cite[Remark 6.10]{gutierrez-mawi:refractorwithlossofenergy} for discrete targets.
\begin{theorem}\label{existence1}
Let $g \in L^1(\m)$ with $g > 0$ a.e., $f_1, \ldots, f_N$ are positive numbers, $m_1,\cdots ,m_N\in S^2$ are distinct points with $x\cdot m_j\geq \kappa$ for all $x\in \Omega$ and $1\leq j\leq N$.
Assume the energy conservation condition
\begin{equation}\label{energyconserved}
\int_{\m} g(x)\,dx = f_1+\cdots +f_N.
\end{equation}
Then there exists a refractor unique up to dilations\footnote{The assumption $g>0$ a.e. is only used to prove uniqueness up to dilations.}, having the form \eqref{refractordefinedbyfiniteellipsoids}, and solving $G_{\r(\b)}(m_i)  = f_i$ for all $i = 1, \ldots, N$.
\end{theorem}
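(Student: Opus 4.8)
The plan is to treat this as a finite--dimensional problem. By the dilation invariance of the refractor measure (multiplying $\b$ by $t>0$ replaces $\r(\b)$ by its dilation $t\,\r(\b)$ and leaves every $\t_{\r(\b)}(m_i)$, hence every $G_{\r(\b)}(m_i)$, unchanged) we may normalize $b_N=1$ and look for $(b_1,\dots,b_{N-1})\in\R_+^{N-1}$. Write $G_i(\b):=G_{\r(\b)}(m_i)$. Since at each $x\in\m$ the minimum in \eqref{refractordefinedbyfiniteellipsoids} is attained, and the loci where two of the functions $b_i/(1-\kappa\,m_i\cdot x)$ coincide are unions of circles and hence null, one always has $\sum_{i=1}^N G_i(\b)=\int_\m g(x)\,dx$, which by \eqref{energyconserved} equals $\sum_{i=1}^N f_i$; so it suffices to solve $G_i(\b)=f_i$ for $i=1,\dots,N-1$. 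I will use two structural facts: each $G_i$ is continuous on $\R_+^{N-1}$ (from uniform continuity of $\b\mapsto\r(\b)$, nullity of the boundaries of the tracing sets, and $g\in L^1$); and the basic monotonicity, namely that lowering a coordinate $b_i$ enlarges $\t_{\r(\b)}(m_i)$ and shrinks every other tracing set, so $G_i$ is non-increasing in $b_i$ and non-decreasing in each $b_j$, $j\neq i$.

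For existence I will run the dragging scheme of \cite{Caf-K-Oli:antenna}. First I record the a priori bounds $\frac1{1+\kappa}\le b_i\le 1+\kappa$ for any solution: $f_i>0$ forces $\t_{\r(\b)}(m_i)\neq\emptyset$, and comparing the $i$-th and $N$-th ellipsoids at a point $x$ of that set, using $m_k\cdot x\le 1$ and the non-total-reflection condition \eqref{geometricconstraintfork<1}, gives both inequalities at once. Then, starting from $\b^{(0)}$ with $b_N=1$ and $b_i=1+\kappa$ for $i<N$ (so that $G_i(\b^{(0)})=0\le f_i$), I repeatedly select, by a fair rule, an index $i<N$ with $G_i<f_i$ and lower $b_i$ to the largest value for which $G_i=f_i$; this is possible because $G_i$ increases continuously to $\sum_j f_j>f_i$ as $b_i\downarrow 0$. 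By the monotonicity, such a step only decreases $G_j$ for $j\neq i$, so the invariant $G_i\le f_i$ $(i<N)$, equivalently $G_N\ge f_N$, is preserved; the coordinates decrease monotonically and stay $\ge\frac1{1+\kappa}$, hence converge, and the limit $\b^{\ast}$ must satisfy $G_i(\b^{\ast})=f_i$ for all $i$, since otherwise the scheme would be forced to drag some $b_i$ strictly below its limit. I will remark that existence alone can also be obtained from a Brouwer degree argument on the compact box $\prod_{i<N}[\frac1{1+\kappa},1+\kappa]$, using the competitive structure of $(G_1,\dots,G_{N-1})$, or by specializing the solvability of the far field refractor problem in \cite{gutierrez-huang:farfieldrefractor} and \cite{gutierrez-mawi:refractorwithlossofenergy} to $\mu=\sum_i f_i\,\delta_{m_i}$; the quantitative statement that the scheme stops after finitely many steps is exactly what the Lipschitz bound of Sections~\ref{sec:lipschitzestimates} and \ref{sec:lipschitzestimateimpliesstop} provides and is not needed for Theorem~\ref{existence1}.

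For uniqueness up to dilations, let $\r(\b)$ and $\r(\b')$ both solve the problem, with radial functions $\p,\p'$. I will first observe that, $f_j>0$ making each $m_j$ an active supporting direction, $b_j=\max_{x\in\m}\p(x)(1-\kappa\,m_j\cdot x)$, and likewise for $b'_j$. After the dilation $\b'\mapsto\b'/t_0$ with $t_0=\min_{x\in\m}\p'(x)/\p(x)$, we may assume $\p\le\p'$ on $\m$ with equality at some $x_0$; the formula for the coefficients then gives $b_j\le b'_j$ for every $j$. Put $I=\{j:b_j=b'_j\}$. A supporting direction $m_{j_1}$ of $\r(\b')$ at $x_0$ also supports $\r(\b)$ at $x_0$ (because $\p\le\p'$ and $\p(x_0)=\p'(x_0)$), which forces $b'_{j_1}=b_{j_1}$; thus $I\neq\emptyset$. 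For $j\in I$ the two refractors use the common ellipsoid $E(m_j,b_j)$, whence $\t_{\r(\b)}(m_j)\subseteq\t_{\r(\b')}(m_j)$; comparing $g$-masses and using $g>0$ a.e. gives equality of these sets up to a null set, and $\p=\p'$ on their union. The main obstacle is the final step, proving $I=\{1,\dots,N\}$: on the complementary closed set $\bigcup_{j\notin I}\t_{\r(\b)}(m_j)$, which carries $g$-mass $\sum_{j\notin I}f_j$, both refractors are again of the form \eqref{refractordefinedbyfiniteellipsoids} built from $\{m_j:j\notin I\}$ and realize the same target values there, while $b_j<b'_j$ on that index set; ruling this out by a comparison/induction argument on that region, equivalently by appealing to the uniqueness part of the mass transport theorem of \cite{gutierrez-huang:farfieldrefractor}, then forces $\p\equiv\p'$, i.e.\ $\b'=t_0\,\b$.
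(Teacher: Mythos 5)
First, a point of reference: the paper does not prove Theorem \ref{existence1} at all --- it is quoted from \cite[Remark 6.10]{gutierrez-mawi:refractorwithlossofenergy} (existence and uniqueness are also in \cite{gutierrez-huang:farfieldrefractor} via mass transport). So your attempt is being measured against the cited sources rather than an in-paper argument. Your existence argument is essentially sound and coincides with the route the paper itself takes for its algorithm: the dragging scheme of Section \ref{subsect:detaileddescriptionofthealgorithm} run to its limit, which is exactly what the paper's subsection ``Limit as $n\to\infty$'' carries out under the sole hypothesis $g\in L^1$. You are also right that the Lipschitz estimate of Proposition \ref{prop:estimateofGiisimpler} is only needed for the quantitative, finite-step version. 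Two small repairs: start with $b_i>1+\kappa$ strictly (at $b_i=1+\kappa$ the tracing set need not be empty, only of measure zero --- compare Lemma \ref{lm:Wnonemptyandlowerbound}(1)), and make the ``fair rule'' explicit (say cyclic), so that along the subsequence of steps at which index $i$ is processed one has $G_i=f_i$ immediately afterwards, and continuity (Lemma \ref{lm:continuity}) passes this to the limit; the phrase ``otherwise the scheme would drag some $b_i$ below its limit'' is not by itself a proof. The a priori lower bound $b_i\geq 1/(1+\kappa)$ for the iterates does follow from your invariant $G_N\geq f_N>0$ exactly as in Lemma \ref{lm:Wnonemptyandlowerbound}(2).

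The genuine gap is in the uniqueness half, and it sits precisely where you flag ``the main obstacle.'' The normalization $\rho\leq\rho'$ with contact at $x_0$, the identity $b_j=\max_{x\in\m}\rho(x)(1-\kappa\,m_j\cdot x)$, the conclusion $b_j\leq b_j'$, the nonemptiness of $I=\{j:b_j=b_j'\}$, and the equality (mod null sets) of the tracing sets for $j\in I$ are all correct and are the standard opening moves. But showing $I=\{1,\dots,N\}$ \emph{is} the uniqueness proof; everything before it is bookkeeping. The ``comparison/induction on the region $A=\bigcup_{j\notin I}\t_{\r(\b)}(m_j)$'' that you gesture at is delicate for two reasons. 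First, the step that rules out $b_j<b_j'$ on $I^c$ ultimately requires \emph{strict} monotonicity of $G_j$ in $b_j$, and --- as the paper's own remark following Lemma \ref{monotonicityoftrace} shows --- this uses not only $g>0$ a.e.\ but also connectedness of $\Omega$ together with a topological argument; it is not a formal consequence of the inclusions \eqref{eq:inclusionwithell}--\eqref{eq:inclusionwithinotell}. Second, the set $A$ on which you propose to induct is only defined up to null sets, need not be connected, and need not be a domain of the type for which the theorem is stated, so the induction hypothesis does not literally apply to it. Falling back on the uniqueness theorem of \cite{gutierrez-huang:farfieldrefractor} is legitimate, but then your uniqueness ``proof'' reduces to the same citation the paper makes, rather than an independent argument. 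As written, the proposal establishes existence but leaves uniqueness incomplete.
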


{\it The main result of this paper is to describe an iterative scheme to construct this refractor with arbitrary precision.} That is, given $g\in L^\infty$ non negative, and $f_1,\cdots ,f_N;m_1,\cdots ,m_N,$ as in Theorem \ref{existence1}, and  $\epsilon > 0$ we find a vector $\b \in \R_+^N$, which depends on $\epsilon$, such that the refractor $\r(\b)$ of the form \ref{refractordefinedbyfiniteellipsoids} satisfies
\begin{equation}\label{conditionfornumericalsolution}
|G_{\r(\b)}(m_i) - f_i| \leq \epsilon,\qquad 1\leq i\leq N.
\end{equation}

\section{Preliminary results}


\subsection{Lemmas for the tracing map and refractor measures}\label{subset:lemmasfortracingmapandrefractormeasures}

\begin{lemma}\label{forlarge_b_energyis0}
Let $\b = (b_1, \ldots, b_N) \in \R^N$ with each $b_i > 0.$ Consider the family of refractors obtained from $\r(\b)= \{ \rho(x)x : x \in \Omega\},$ by changing only $b_i$ and fixing $b_j$ for all $j \neq i.$ Then:
\begin{itemize}
\item [i.] $G_{\r(\b)}(m_i) = 0$ for $b_i > (1+\kappa)\min_{j\neq i}b_j.$
\item [ii.]$G_{\r(\b)}(m_i) =\int_\Omega g(x)\,dx$ for  $0<b_i<\dfrac{\min_{j\neq i}b_j}{1+\kappa}.$ 
\end{itemize}
\end{lemma}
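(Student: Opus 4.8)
The plan is to reduce both statements to a description of the tracing set $\t_{\r(\b)}(m_i)$ and then feed in the elementary bounds
\[
1-\kappa \ \le\ 1-\kappa\,m_j\cdot x \ \le\ 1-\kappa^2 ,
\]
which hold for every $x\in\bar\Omega$ and every $j$ because $m_j\cdot x\in[\kappa,1]$ by \eqref{geometricconstraintfork<1} and $|m_j|=|x|=1$. Throughout write $b_{j_0}=\min_{j\neq i}b_j$ and $\rho(x)=\min_{1\le j\le N}b_j/(1-\kappa\,m_j\cdot x)$.

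For part (ii) I would show that, under $0<b_i<b_{j_0}/(1+\kappa)$, the $i$-th semi-ellipsoid is already the whole refractor: for every $x\in\bar\Omega$ and every $j\neq i$ the bounds above give
\[
\frac{b_i}{1-\kappa\,m_i\cdot x} \ \le\ \frac{b_i}{1-\kappa} \ <\ \frac{b_{j_0}}{(1+\kappa)(1-\kappa)} \ =\ \frac{b_{j_0}}{1-\kappa^2} \ \le\ \frac{b_j}{1-\kappa\,m_j\cdot x},
\]
so $\rho(x)=b_i/(1-\kappa\,m_i\cdot x)$ on all of $\bar\Omega$. Hence $E(m_i,b_i)$ is a supporting semi-ellipsoid at \emph{every} point of $\r(\b)$, so $\t_{\r(\b)}(m_i)=\bar\Omega$ and $G_{\r(\b)}(m_i)=\int_{\bar\Omega}g=\int_{\Omega}g$, the last equality because $\partial\Omega$ has surface measure zero. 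This part is routine.

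For part (i) the symmetric computation comes first: when $b_i>(1+\kappa)b_{j_0}$,
\[
\frac{b_i}{1-\kappa\,m_i\cdot x} \ \ge\ \frac{b_i}{1-\kappa^2} \ >\ \frac{(1+\kappa)b_{j_0}}{1-\kappa^2} \ =\ \frac{b_{j_0}}{1-\kappa} \ \ge\ \rho(x)\qquad(x\in\bar\Omega),
\]
so $E(m_i,b_i)$ never meets $\r(\b)$, and moreover $\rho(x)\bigl(1-\kappa\,m_i\cdot x\bigr)\le(1+\kappa)b_{j_0}<b_i$ for all $x$. It then remains to prove that $\t_{\r(\b)}(m_i)$ is Lebesgue-null. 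The point that needs care is that, although $E(m_i,b_i)$ misses $\r(\b)$, a \emph{smaller} semi-ellipsoid $E(m_i,b)$ with $b<b_i$ might still support it, so one cannot simply discard the direction $m_i$. If $x_o\in\t_{\r(\b)}(m_i)$, then for some $b>0$ one has $\rho(x_o)=b/(1-\kappa\,m_i\cdot x_o)$ and $\rho(x)\le b/(1-\kappa\,m_i\cdot x)$ for all $x\in\bar\Omega$, i.e.\ $\rho(x)(1-\kappa\,m_i\cdot x)\le b$ for all $x$. The latter forces $b\ge b^{*}:=\max_{x\in\bar\Omega}\rho(x)(1-\kappa\,m_i\cdot x)$, while the former gives $b=\rho(x_o)(1-\kappa\,m_i\cdot x_o)\le b^{*}$; hence $b=b^{*}$ and $x_o$ lies in
\[
\Bigl\{x\in\bar\Omega:\ \min_{1\le j\le N}\frac{b_j\,(1-\kappa\,m_i\cdot x)}{1-\kappa\,m_j\cdot x}=b^{*}\Bigr\}.
\]
Since $b^{*}\le(1+\kappa)b_{j_0}<b_i$ while the $j=i$ term of that minimum is the constant $b_i$, the minimum can equal $b^{*}$ only where it is attained by some $j\neq i$, so this set is contained in $\bigcup_{j\neq i}\{x\in S^2:\ b_j(1-\kappa\,m_i\cdot x)=b^{*}(1-\kappa\,m_j\cdot x)\}$. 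Each of these is the intersection of $S^2$ with the plane $\kappa(b_j m_i-b^{*}m_j)\cdot x=b_j-b^{*}$; and $b_j m_i-b^{*}m_j\neq 0$ (otherwise taking norms forces $b_j=b^{*}$ and then $m_i=m_j$, impossible as the $m_\ell$ are distinct), so the plane is nondegenerate and its trace on $S^2$ is a circle, a point, or empty --- in every case of surface measure zero. A finite union of such sets is still null, hence $\t_{\r(\b)}(m_i)$ is null and $G_{\r(\b)}(m_i)=0$.

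The main obstacle is exactly this last step of (i): it is tempting to argue that, since $E(m_i,b_i)$ does not touch $\r(\b)$, the direction $m_i$ is never supporting, but the definition of refractor allows an arbitrary parameter $b>0$, so one must control \emph{all} semi-ellipsoids with axis $m_i$ at once. Pinning down the only possible value $b=b^{*}$ and recognizing the contact set as a finite union of plane sections of $S^2$ is what makes this work; everything else --- part (ii) and the two pinching estimates --- is a direct consequence of the no-total-reflection condition \eqref{geometricconstraintfork<1}.
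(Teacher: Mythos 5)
Your part (ii) is the same computation as the paper's. For part (i) you diverge at exactly the step that requires care. The paper restricts attention to non-singular points of $\r(\b)$ and invokes \cite[Lemma 5.1]{gutierrez-mawi:refractorwithlossofenergy}, which guarantees that at such a point any supporting semi-ellipsoid with axis $m_i$ must be precisely $E(m_i,b_i)$; the resulting inequality $b_i\le(1+\kappa)\min_{j\neq i}b_j$ fails by hypothesis, so $\t_{\r(\b)}(m_i)$ is contained in the singular set, which is null. You instead keep the whole tracing set, use the two halves of the definition of ``supporting'' to pin down the parameter of any supporting semi-ellipsoid with axis $m_i$ as $b^{*}=\max_{\bar\Omega}\rho(x)(1-\kappa\,m_i\cdot x)$, observe that $b^{*}\le(1+\kappa)\min_{j\neq i}b_j<b_i$ forces the contact to occur on an ellipsoid with $j\neq i$, and identify the contact set as a finite union of nondegenerate plane sections of $S^2$, hence null. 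Both arguments are correct. Yours is self-contained --- it needs neither the cited lemma nor the fact that the singular set is null --- and it proves slightly more, namely that all of $\t_{\r(\b)}(m_i)$, singular points included, lies in finitely many circles; the paper's route is shorter because it leans on machinery already established in the references.
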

\begin{proof}
To prove (i) suppose $x \in \t_{\r(\b)}(m_i)$, and $x$ is not a singular point of $\r(\b)$. Then $E(m_i, b_i)$ is a supporting semi-ellipsoid to $\r(\b)$ at $\rho(x)x.$ 
So we have
\[
\dfrac{b_i}{1-\kappa m_i \cdot x} \leq  \dfrac{b_j}{1-\kappa m_j\cdot x}
\]
for all $j= 1, \ldots, N.$ Therefore
\begin{equation*}\label{bound on b_i}
b_i \leq \dfrac{1-\kappa m_i \cdot x}{1-\kappa m_j \cdot x} b_j 
\leq 
\dfrac{1-\kappa^2}{1-\kappa}b_j
=(1+\kappa)b_j,\,j=1,\cdots ,N.
\end{equation*}
Hence if $b_i > (1+\kappa)\min_{j\neq i}b_j$, then $\t_{\r(\b)}(m_i) \subset S,$ where $S$ is the singular set of $\mathcal R (\b).$
The first part of the lemma is then proved.
\noindent

Let us prove (ii). 
Let $b_0=\min_{j\neq i}b_j$, and take $0<b_i<b_0/(1+\kappa)$.
Then for any $x \in \Omega$ and for any $j \neq i$ we have
\begin{align*}
\dfrac{b_i}{1-\kappa m_i \cdot x} < \dfrac{b_0/(1+\kappa)}{1-\kappa m_i \cdot x}
\leq \dfrac{b_o}{1 - \kappa^2}
\leq \dfrac{b_j}{1 - \kappa^2} \leq \dfrac{b_j}{1-\kappa m_j \cdot x}.
\end{align*}
So for $0<b_i<b_0/(1+\kappa)$ we obtain
\[
\min_{1 \leq l \leq N}\dfrac{b_l}{1-\kappa m_l \cdot x} = \dfrac{b_i}{1-\kappa m_i \cdot x}
\]
and consequently $\t_{\r(\b)}(m_i) = \m$ completing the proof of part (ii) of the Lemma.
\end{proof}

\begin{remark}\label{rmk:Gconstantoutsideasetdefinedbylinearinequalities}\rm
For each fixed $1\leq i\leq N$, from Lemma \ref{forlarge_b_energyis0}, the function $G_{\r(\b)}(m_i)$ is constant on the set defined by linear inequalities 
\[
F_i:=\bigcup_{j\neq i}\left\{\b=(b_1,\cdots ,b_N): b_i\geq (1+\kappa)\,b_j\right\}\bigcup 
\bigcap_{j\neq i}\left\{\b=(b_1,\cdots ,b_N): b_i\leq \dfrac{1}{1+\kappa}\,b_j\right\}.
\]
If we set $G_i(\b)=G_{\r(\b)}(m_i)$, $1\leq i \leq N$, and consider the map $\b=(b_1,\cdots ,b_N)\mapsto \left(G_1(\b),\cdots ,G_N(\b)\right)$, the Jacobian of this map is zero on the set $\cup_{i=1}^NF_i$.
\end{remark}
\begin{lemma}\label{monotonicityoftrace}
Let $\b = (b_1, \ldots, b_N)$ and $\b^* = (b^*_1, \ldots, b_N^*)$ be in $\R^N_+.$ 
Suppose that for some $l$, $b^*_l \leq b_l$ and for all $i \neq l,$ $b_i^* = b_i$, where $1 \leq l, i\leq N.$ Then
\begin{equation}\label{eq:inclusionwithell}
\t_{\r(\b)}(m_l) \subseteq \t_{\r(\b^*)}(m_l)
\end{equation}
and 
\begin{equation}\label{eq:inclusionwithinotell}
\t_{\r(\b^*)}(m_i) \subseteq \t_{\r(\b)}(m_i) \  \textrm{for} \ i \neq l,
\end{equation}
where the inclusions are up to a set of measure zero.
Consequently
\[
G_{\r(\b)}(m_l) \leq G_{\r(\b^*)}(m_l) \ \textrm{and} \ G_{\r(\b)}(m_i) \geq G_{\r(\b^*)}(m_i) \  \textrm{for} \ i \neq l.
\]
\end{lemma}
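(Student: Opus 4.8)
The plan is to prove the two tracing-map inclusions first and then deduce the measure inequalities by monotonicity of the integral, exactly as in Lemma \ref{forlarge_b_energyis0}. Fix $x\in\bar\Omega$ and write $\rho_{\b}(x)=\min_{1\le j\le N} b_j/(1-\kappa m_j\cdot x)$ and likewise $\rho_{\b^*}(x)$; since $b_i^*=b_i$ for $i\neq l$ and $b_l^*\le b_l$, we have $b_j^*/(1-\kappa m_j\cdot x)\le b_j/(1-\kappa m_j\cdot x)$ for every $j$ (with equality for $j\neq l$), hence $\rho_{\b^*}(x)\le\rho_{\b}(x)$ for all $x$, and moreover the two functions agree at any $x$ where the minimum for $\b$ is attained at some index $j\neq l$.

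For \eqref{eq:inclusionwithell}: if $x\in\t_{\r(\b)}(m_l)$ and $x$ is not a singular point, then $E(m_l,b_l)$ supports $\r(\b)$ at $\rho_{\b}(x)x$, i.e.\ $\rho_{\b}(x)=b_l/(1-\kappa m_l\cdot x)$ and this value is $\le b_j/(1-\kappa m_j\cdot x)$ for all $j$. Lowering $b_l$ to $b_l^*$ only lowers the $l$-th term and leaves the others unchanged, so $b_l^*/(1-\kappa m_l\cdot x)\le b_j^*/(1-\kappa m_j\cdot x)$ for all $j$, whence $\rho_{\b^*}(x)=b_l^*/(1-\kappa m_l\cdot x)$ and $E(m_l,b_l^*)$ supports $\r(\b^*)$ at $x$; thus $x\in\t_{\r(\b^*)}(m_l)$. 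For \eqref{eq:inclusionwithinotell}: take $i\neq l$ and $x\in\t_{\r(\b^*)}(m_i)$ a non-singular point, so $\rho_{\b^*}(x)=b_i^*/(1-\kappa m_i\cdot x)=b_i/(1-\kappa m_i\cdot x)$ and this is $\le b_j^*/(1-\kappa m_j\cdot x)\le b_j/(1-\kappa m_j\cdot x)$ for all $j$ (the last inequality because $b_j^*\le b_j$ for all $j$). Hence $b_i/(1-\kappa m_i\cdot x)\le b_j/(1-\kappa m_j\cdot x)$ for all $j$, so $\rho_{\b}(x)$ is attained at index $i$ and $E(m_i,b_i)$ supports $\r(\b)$ at $x$, i.e.\ $x\in\t_{\r(\b)}(m_i)$. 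In both cases the singular sets are discarded, which is harmless because a refractor of the form \eqref{refractordefinedbyfiniteellipsoids} is Lipschitz and its singular set — the finite union of the sets where two distinct ellipsoid graphs meet — has surface measure zero on $S^2$; this is the one point I would state carefully rather than wave at.

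Finally, the measure inequalities follow immediately: since $g\ge 0$ and $g\in L^1(\bar\Omega)$, Definition \ref{sol1} gives $G_{\r(\b)}(m_l)=\int_{\t_{\r(\b)}(m_l)}g\,dx\le\int_{\t_{\r(\b^*)}(m_l)}g\,dx=G_{\r(\b^*)}(m_l)$ from \eqref{eq:inclusionwithell}, and symmetrically $G_{\r(\b)}(m_i)\ge G_{\r(\b^*)}(m_i)$ for $i\neq l$ from \eqref{eq:inclusionwithinotell}, using that the inclusions hold up to null sets. The only genuine obstacle is the bookkeeping about singular points — one must make sure that "supporting ellipsoid at a non-singular $x$" forces the minimum in \eqref{refractordefinedbyfiniteellipsoids} to be attained at the expected index, and that almost every $x\in\t_{\r}(m)$ is non-singular — but this is routine given the Lipschitz regularity noted after the definition of refractor and the zero surface measure of finite unions of lower-dimensional graphs.
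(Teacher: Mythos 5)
Your proof is correct. For the inclusion \eqref{eq:inclusionwithinotell} it follows essentially the paper's route: at a non-singular point of $\t_{\r(\b^*)}(m_i)$ the supporting semi-ellipsoid must be $E(m_i,b_i^*)=E(m_i,b_i)$, and since $\rho^*\leq\rho\leq b_i/(1-\kappa\, m_i\cdot x)$ the same ellipsoid supports $\r(\b)$ there. For \eqref{eq:inclusionwithell} you argue differently from, and more directly than, the paper: the paper proceeds by contradiction, showing that if $E(m_l,b_l^*)$ did not support $\r(\b^*)$ at $x_0$ then some $E(m_j,b_j)$ with $j\neq l$ would, whence by the already-proved \eqref{eq:inclusionwithinotell} one gets $x_0\in\t_{\r(\b)}(m_j)\cap\t_{\r(\b)}(m_l)$ and $x_0$ is singular for $\r(\b)$; for this the paper must assume $x_0$ is non-singular for \emph{both} $\r(\b)$ and $\r(\b^*)$. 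You instead verify directly that lowering only the $l$-th numerator preserves the chain $b_l^*/(1-\kappa\, m_l\cdot x_0)\leq b_j^*/(1-\kappa\, m_j\cdot x_0)$ for all $j$, so the minimum defining $\rho_{\b^*}$ is still attained at index $l$ and $E(m_l,b_l^*)$ supports $\r(\b^*)$ at $x_0$; this is shorter, needs non-singularity only with respect to $\r(\b)$, and makes no appeal to the other inclusion. Both arguments hinge on the same external fact, which you rightly identify as the crux: at a non-singular $x_0\in\t_{\r(\b)}(m_j)$ the supporting semi-ellipsoid with axis $m_j$ is necessarily $E(m_j,b_j)$; the paper imports this from \cite[Lemma 5.1]{gutierrez-mawi:refractorwithlossofenergy}, and its footnote shows the claim genuinely fails at singular points, so you should cite rather than treat it as self-evident. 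Your justification that the discarded singular sets are null (they lie in the finite union of the circles where two defining ellipsoids agree, i.e., boundaries of the geodesic disks of Lemma \ref{lm:descriptionofgeodesicdisk}) is sound and is in fact left implicit in the paper; the passage to the measure inequalities is identical.
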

\begin{proof}
We use here that if $x_0\in \t_{\r(\b)}(m_l)$ and $x_0$ is not a singular point, then the ellipsoid $E(m_l,b_l)$ supports $\r(\b)$ at $x_0$, this holds for any refractor $\r(\b)$ and any $1\leq l\leq N$; see \cite[Lemma 5.1]{gutierrez-mawi:refractorwithlossofenergy}.\footnote{The restriction that $x_0$ is not a singular point cannot be disposed of. For example, consider a refractor $\r$ that is the min of only two semi-ellipsoids $E(m_2,b_2)$ and $E(m_3,b_3)$. Take a singular point $x_0$ of this refractor and consider a supporting semi-ellipsoid $E(m_1,b)$ at $x_0$ having another direction $m_1$.  Take now $E(m_1,b_1)$ with $b_1>b$.
The refractor can be defined with the three ellipsoids $E(m_i,b_i)$, $1\leq i\leq 3$, because the definition of refractor does not see $E(m_1,b_1)$, but $E(m_1,b)$ is supporting at $x_0$ and $b<b_1$ and $E(m_1,b_1)$ does not support $\r$ at $x_0$.}

We first prove \eqref{eq:inclusionwithinotell} when $x_0$ is not a singular point of $\r(\b^*)$. Since $b_l^*\leq b_l$, we obviously have $\rho^*(x)\leq \rho(x)$ for all $x\in \Omega$, where $\rho^*$ is the parametrization of $\r(\b^*)$ and $\rho$ is the parametrization of $\r(\b)$. Suppose $i\neq l$ and let 
$x_0\in \t_{\r(\b^*)}(m_i)$. Then, since $x_0$ is not a singular point of $\r(\b^*)$, the ellipsoid with polar radius $\dfrac{b_i}{1 - \kappa x\cdot m_i}$ supports $\r(\b^*)$ at $x_0$. We have $\rho(x)\leq \dfrac{b_i}{1 - \kappa x\cdot m_i}$. Therefore 
\[
\dfrac{b_i}{1 - \kappa x_0\cdot m_i}=\rho^*(x_0)\leq \rho(x_0)\leq \dfrac{b_i}{1 - \kappa x_0\cdot m_i},
\]
that is, $x_0\in \t_{\r(\b)}(m_i)$. 
\newline
We now prove \eqref{eq:inclusionwithell}. That is, if $x_0$ is neither a singular point of $\r(\b)$ nor a singular point of $\r(\b^*)$, and $x_0\in \t_{\r(\b)}(m_l)$, then $x_0\in \t_{\r(\b^*)}(m_l)$. We may assume $b_l^*<b_l$.
We have that $E(m_l,b_l)$ supports $\r(\b)$ at $x_0$. We claim that the ellipsoid with polar radius $\dfrac{b_l^*}{1-\kappa\,x\cdot m_l}$ supports $\r(\b^*)$ at $x_0$. 
Suppose this is not true. Since by definition $\rho^*(x)\leq \dfrac{b_l^*}{1-\kappa\,x\cdot m_l}$, we would have $\rho^*(x_0)< \dfrac{b_l^*}{1-\kappa\,x_0\cdot m_l}$.
So $\rho^*(x_0)=\dfrac{b_j}{1-\kappa\,x_0\cdot m_j}$ for some $j\neq l$, and therefore $\dfrac{b_j}{1-\kappa\,x\cdot m_j}$  supports $\r(\b^*)$ at $x_0$.
Since $x_0$ is not a singular point of $\r(\b^*)$, then by the inclusion previously proved we get that $x_0\in \t_{\r(\b)}(m_j)$. Since $j\neq l$ we obtain that $x_0$ is a singular point of $\r(\b)$, a contradiction.
%
\end{proof}

\begin{remark}\rm
We show that if $\Omega$ is connected, $0<|\t_{\r(\b)}(m_l)|<|\Omega|$, and $b_\ell^*<b_\ell$, then 
$|\t_{\r(\b^*)}(m_l)\setminus \t_{\r(\b)}(m_l)|>0$. 
Therefore, if $g>0$ a.e., then this implies that if $0<G_{\r(\b)}(m_l)<\int_\Omega g(x)\,dx$  we obtain $G_{\r(\b)}(m_l)< G_{\r(\b^*)}(m_l)$ when $b_\ell^*<b_\ell$.

In fact, the proof follows the argument in \cite[Lemma 4.12]{gutierrez:cimelectures}.
Since $|\t_{\r(\b)}(m_l)|>0$, by \cite[Lemma 4.11]{gutierrez:cimelectures} if $x_0\in  \t_{\r(\b)}(m_l)$, then the semi-ellipsoid $E(m_\ell,b_\ell)$ supports $\r(\b)$ at $x_0$.
Hence $\dfrac{b_j}{1-\kappa\,m_j\cdot x_0}\geq \dfrac{b_\ell}{1-\kappa\,m_\ell\cdot x_0}$ for all $j$. Since $b_\ell^*<b_\ell$, we then get
\[
\dfrac{b_j}{1-\kappa\,m_j\cdot x_0}> \dfrac{b_\ell^*}{1-\kappa\,m_\ell \cdot x_0}\qquad \forall j.
\]
By continuity there is a neighborhood $V_{x_0}$ such that 
\[
\dfrac{b_j}{1-\kappa\,m_j\cdot x}> \dfrac{b_\ell^*}{1-\kappa\,m_\ell \cdot x}\qquad \forall j,\quad \forall x\in V_{x_0}.
\]
Thus $V_{x_0}\subset \t_{\r(\b^*)}(m_l)$. Therefore,
we have the inclusion 
\[
\t:=\t_{\r(\b)}(m_l)\subset \text{\it interior}\left(\t_{\r(\b^*)}(m_l) \right):=\mathcal O.
\]
On the other hand, from the proof of \cite[Lemma 3.12]{gutierrez:cimelectures}, the set $\t_{\r(\b)}(m_l)$ is compact. Therefore the set $\mathcal O\setminus \t_{\r(\b)}(m_l)$ is an open set. 
Now since $G_{\r(\b)}(m_l)<\int_\Omega g(x)\,dx$, by the continuity of the refractor measure as a function of $b_\ell$, Lemma \ref{lm:continuity}(ii) below, we have that $G_{\r(\b^*)}(m_l)<\int_\Omega g(x)\,dx$ 
for $b_\ell^*$ sufficiently close to $b_\ell$. 
Since $G_{\r(\b^*)}(m_l)$ increases when $b_\ell^*$ decreases, it is enough to prove the desired inequality when  $b_\ell^*$ is sufficiently close to $b_\ell$.
This implies that $\mathcal O\neq \bar \Omega$.
So we have the configuration $\t$ closed, $\t\subset \mathcal O\subsetneqq \bar \Omega$.
If the set $\mathcal O\setminus \t\neq \emptyset$, then since $\mathcal O\setminus \t$ is open, we have $|\mathcal O\setminus \t|>0$. Since $\mathcal O\setminus \t\subset \t_{\r(\b^*)}(m_l)\setminus \t_{\r(\b)}(m_l)$, we obtain the desired result. 
It then remains to show that $\mathcal O\setminus \t\neq \emptyset$.
Suppose by contradiction that $\mathcal O\setminus \t= \emptyset$.
That is, $\mathcal O\cap (\bar \Omega\setminus \t)=\emptyset$.
We shall prove this implies that 
\begin{equation}\label{eq:omegadisconnected}
\bar \Omega=\mathcal O\cup(\bar \Omega\cap \t^c).
\end{equation} 
Since both sets in this union are open ($\t$ is closed) relative to $\bar \Omega$, and $\mathcal O\neq \bar \Omega$, we obtain that $\bar \Omega$ is disconnected, contradicting the assumption that $\bar \Omega$ is connected.
So let us prove \eqref{eq:omegadisconnected}. Write
\begin{align*}
\mathcal O\cup(\bar \Omega\cap \t^c)
&=
(\mathcal O \cup \bar \Omega)\cap (\mathcal O \cup \t^c)
=\bar \Omega\cap (\mathcal O \cup \t^c)\\
&\supset \bar \Omega \cap (\mathcal O \cup \mathcal O^c)=\bar \Omega\qquad \text{since $\t\subset \mathcal O$}.
\end{align*}
This completes the remark.
\end{remark}

By \cite[Lemma 3.6]{gutierrez-mawi:refractorwithlossofenergy}, we also have the following:
\begin{lemma}\label{convergence of refractors}
Let $\mathcal R_{j} = \{\p_{j}(x)x: x \in \m\}$, $j \geq 1$ be refractors from $\m$ to $\M.$ Suppose that $0 < a_1 \leq \p_{j} \leq a_2$ and $\p_{j} \to \p$ pointwise on $\m.$ 
Then:
\begin{itemize}
\item[i.] $\mathcal R := \{\p(x)x: x \in \m\}$ is a refractor from $\m$ to $\M.$
\item[ii.]The measures $G_{\r_j}$ converge weakly to the measure $G_{\r}.$
\end{itemize}
\end{lemma}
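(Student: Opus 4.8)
The plan is to deduce part (i) from a compactness argument on the parameters $(m,b)$ of the supporting semi-ellipsoids, and then to obtain part (ii) from part (i) by combining the portmanteau characterization of weak convergence with a set-theoretic inclusion relating the tracing sets of the $\r_j$ to those of $\r$.

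For part (i) I would fix $x_o\in\m$ and, for each $j$, choose a semi-ellipsoid $E(m_j,b_j)$ with $m_j\in\M$ supporting $\r_j$ at $x_o$, so that $\p_j(x_o)=\dfrac{b_j}{1-\kappa\,m_j\cdot x_o}$ and $\p_j(x)\leq\dfrac{b_j}{1-\kappa\,m_j\cdot x}$ for all $x\in\m$. The directions $m_j$ lie in the compact set $\M$; moreover $\kappa\leq m_j\cdot x_o\leq1$ by \eqref{geometricconstraintfork<1}, so $1-\kappa\leq1-\kappa\,m_j\cdot x_o\leq1-\kappa^2$ and hence $b_j=\p_j(x_o)\,(1-\kappa\,m_j\cdot x_o)\in[a_1(1-\kappa),\,a_2(1-\kappa^2)]$, a compact subset of $(0,\infty)$. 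Passing to a subsequence I get $m_j\to m\in\M$ and $b_j\to b>0$; letting $j\to\infty$ in the two relations above and using $\p_j\to\p$ pointwise yields $\p(x_o)=\dfrac{b}{1-\kappa\,m\cdot x_o}$ and $\p(x)\leq\dfrac{b}{1-\kappa\,m\cdot x}$ for every $x$, i.e. $E(m,b)$ supports $\r$ at $x_o$. Since $x_o$ is arbitrary, $\r$ is a refractor from $\m$ to $\M$.

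For part (ii) I would first observe that $G_{\r_j}$ and $G_\r$ are Borel measures on the compact metric space $\M$ sharing the total mass $G_{\r'}(\M)=\int_{\t_{\r'}(\M)}g=\int_\m g$ (valid for any refractor $\r'$ since $\t_{\r'}(\M)=\m$). By the portmanteau theorem it then suffices to prove $\limsup_{j}G_{\r_j}(C)\leq G_\r(C)$ for every closed $C\subseteq\M$. Setting $\t_\infty(C):=\bigcap_{k\geq1}\bigcup_{j\geq k}\t_{\r_j}(C)$, the crux is the inclusion $\t_\infty(C)\subseteq\t_\r(C)$, which I would establish by the subsequence extraction used in part (i): if $x\in\t_{\r_j}(C)$ for infinitely many $j$, pick $m_j\in C\cap\n_{\r_j}(x)$ with supporting semi-ellipsoid $E(m_j,b_j)$ at $x$ where $b_j=\p_j(x)\,(1-\kappa\,m_j\cdot x)$; along a subsequence $m_j\to m\in C$, $b_j\to b>0$, and passing to the limit in the supporting inequalities shows that $E(m,b)$ supports $\r$ at $x$, hence $x\in\t_\r(m)\subseteq\t_\r(C)$. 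The same limiting argument, together with the Lipschitz continuity of $\p$, shows that $\t_{\r'}(C)$ is compact for every closed $C$ and every refractor $\r'$, so all the sets above are Borel. Granting the inclusion, for $j\geq k$ one has $G_{\r_j}(C)\leq\int_{\bigcup_{i\geq k}\t_{\r_i}(C)}g$, whence $\limsup_j G_{\r_j}(C)\leq\int_{\bigcup_{i\geq k}\t_{\r_i}(C)}g$ for all $k$; since the sets $\bigcup_{i\geq k}\t_{\r_i}(C)$ decrease to $\t_\infty(C)$, dominated convergence with majorant $g\in L^1(\m)$ gives, as $k\to\infty$, $\limsup_j G_{\r_j}(C)\leq\int_{\t_\infty(C)}g\leq\int_{\t_\r(C)}g=G_\r(C)$. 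Taking complements yields $\liminf_j G_{\r_j}(U)\geq G_\r(U)$ for open $U$, and together with the equality of total masses this is the desired weak convergence $G_{\r_j}\to G_\r$.

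The step I expect to be the main obstacle is the inclusion $\t_\infty(C)\subseteq\t_\r(C)$: it requires extracting simultaneously convergent subsequences of the directions $m_j$ (using compactness of $\M$) and of the focal parameters $b_j$ (using the uniform bounds $a_1\leq\p_j\leq a_2$ and \eqref{geometricconstraintfork<1}) and then passing to the limit in the supporting inequalities. Everything else is bookkeeping — measurability of the tracing sets of closed sets and the $L^1$ integrability of $g$ used in the dominated-convergence step — and the argument reproduces that of \cite[Lemma 3.6]{gutierrez-mawi:refractorwithlossofenergy} quoted just above.
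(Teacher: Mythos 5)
Your argument is correct. Note that the paper itself gives no proof of this lemma—it is quoted directly from \cite[Lemma 3.6]{gutierrez-mawi:refractorwithlossofenergy}—and your compactness argument (extracting convergent subsequences of the supporting parameters $(m_j,b_j)$ using the bounds $a_1\leq\p_j\leq a_2$ and condition \eqref{geometricconstraintfork<1}, then passing to the limit in the supporting inequalities, and using the $\limsup$ inclusion of tracing sets plus portmanteau for part (ii)) is exactly the standard route taken in that reference.
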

\begin{lemma}\label{lm:continuity}
If in Lemma \ref{convergence of refractors}, $\r_j$ and $\r$ are defined by finite number of semi-ellipsoids as:
\[
\r_j=\r(\b_j) = \left\{\p(x)x: x \in \m, \p(x) = \min_{1 \leq i \leq N} \dfrac{b^j_i}{1-\kappa m_i \cdot x}\right\}.
\]
and
\[
\r=\r(\b) = \left\{\p(x)x: x \in \m, \p(x) = \min_{1 \leq i \leq N} \dfrac{b_i}{1-\kappa m_i \cdot x}\right\}.
\]
then
\begin{itemize}
\item[i.] $G_{\r_j} = \sum G_{\r_j}(m_i) \delta_{m_i}, G_{\r} = \sum G_{\r}(m_i) \delta_{m_i}\ $
\item[ii.] $G_{\r_j}(m_i) \to G_{\r}(m_i)$ for all $1\leq i\leq N$, when $\b_j\to \b$.
\end{itemize}
\end{lemma}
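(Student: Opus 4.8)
The plan is to deduce Lemma~\ref{lm:continuity} directly from Lemma~\ref{convergence of refractors}, which is already available to us, by exploiting the very special structure of refractors of the form \eqref{refractordefinedbyfiniteellipsoids}. The first step is to prove (i). Fix a refractor $\r=\r(\b)$ of this form. For any $x_0\in\m$ that is not a singular point, the supporting semi-ellipsoid at $x_0$ is one of the finitely many $E(m_i,b_i)$, so $\n_{\r}(x_0)\subseteq\{m_1,\dots,m_N\}$. Hence for every Borel set $\w\subseteq\M$ we have $\t_{\r}(\w)=\bigcup_{m_i\in\w}\t_{\r}(m_i)$ modulo the singular set, which has measure zero (this is where we invoke that the singular set of $\r(\b)$ has Lebesgue measure zero, as used repeatedly in the excerpt). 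Consequently $G_{\r}(\w)=\sum_{m_i\in\w}G_{\r}(m_i)$, which is precisely the statement $G_{\r}=\sum_i G_{\r}(m_i)\delta_{m_i}$; the same argument applied to each $\r_j$ gives the analogous formula for $G_{\r_j}$.

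For (ii), first observe that the hypothesis $\b_j\to\b$ in $\R_+^N$ forces $\p_j\to\p$ pointwise on $\m$: each $\p_j(x)=\min_i b_i^j/(1-\kappa m_i\cdot x)$ is a minimum of finitely many functions that converge, uniformly in $x$ since $1-\kappa m_i\cdot x$ is bounded away from $0$ on $\m$ by \eqref{geometricconstraintfork<1}, so in fact $\p_j\to\p$ uniformly on $\m$. Moreover, since $\b_j\to\b$ with each $b_i>0$, there are constants $0<a_1\le a_2$ with $a_1\le\p_j\le a_2$ for all large $j$ (and we may discard finitely many initial terms). Thus the hypotheses of Lemma~\ref{convergence of refractors} are met, and we conclude $G_{\r_j}\to G_{\r}$ weakly as measures on $\M$. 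Combining this with part (i), the weak convergence reads $\sum_i G_{\r_j}(m_i)\delta_{m_i}\to\sum_i G_{\r}(m_i)\delta_{m_i}$.

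It remains to upgrade weak convergence of these finitely-supported measures to convergence of each coefficient. Here I would fix an index $i$ and choose a continuous function $\phi_i$ on $\M$ (or on $S^2$, restricted) with $\phi_i(m_i)=1$ and $\phi_i(m_k)=0$ for $k\neq i$; such a $\phi_i$ exists because the points $m_1,\dots,m_N$ are distinct, so small disjoint bump functions around each $m_k$ can be arranged by Urysohn's lemma. Testing the weak convergence against $\phi_i$ gives $G_{\r_j}(m_i)=\int\phi_i\,dG_{\r_j}\to\int\phi_i\,dG_{\r}=G_{\r}(m_i)$, which is exactly (ii).

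The argument is essentially routine once Lemma~\ref{convergence of refractors} is in hand; the only point requiring a little care — and the one I would treat as the main (minor) obstacle — is the passage from pointwise to uniform convergence of $\p_j$ and the verification of the uniform bounds $a_1\le\p_j\le a_2$, so that Lemma~\ref{convergence of refractors} genuinely applies along the full sequence rather than just a subsequence; this in turn relies on the geometric constraint \eqref{geometricconstraintfork<1} keeping the denominators $1-\kappa m_i\cdot x$ uniformly positive. The reduction of weak convergence to coefficientwise convergence via separating bump functions is standard and presents no difficulty.
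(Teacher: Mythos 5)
Your proof is correct. The paper itself gives no argument for this lemma (it simply cites \cite[Lemma 4.7]{gutierrez:cimelectures}), and your route --- (i) from the fact that at a non-singular point the supporting semi-ellipsoid direction is unique and hence among the $m_i$, so $G_{\r}$ charges only $\{m_1,\dots,m_N\}$; (ii) from verifying the hypotheses of Lemma \ref{convergence of refractors} and then separating the atoms with continuous bump functions --- is exactly the standard argument one expects the cited reference to contain, with the key supporting facts (measure-zero singular set, \cite[Lemma 5.1]{gutierrez-mawi:refractorwithlossofenergy}, and the uniform positivity of $1-\kappa\,m_i\cdot x$ from \eqref{geometricconstraintfork<1}) correctly identified.
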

For a proof of this lemma see \cite[Lemma 4.7]{gutierrez:cimelectures}.

\subsection{Geodesic disks}\label{sec:analysisofgeodesicdisks}
\label{subsec:geodesicdisks}

Recall that if $\alpha,\beta\in S^2,$ the geodesic distance between them is given by $\cos^{-1} (\alpha \cdot \beta).$ We define a geodesic disk with center $\alpha$ and radius $r$ to be the set of points $x$ on $S^2$ for which $x \cdot \alpha \geq \cos r.$

\begin{lemma}\label{lm:descriptionofgeodesicdisk}
Let $b_1,b_2>0$ and $m_1,m_2\in S^2$ be such that $m_1\neq m_2$.
Consider the set 
\[
V_{12}=\left\{x\in S^2:\dfrac{b_1}{1-\kappa\,x\cdot m_1}\leq \dfrac{b_2}{1-\kappa\,x\cdot m_2} \right\}.
\]
This set is non empty if and only if $\dfrac{b_1-b_2}{\kappa\,|b_1m_2-b_2m_1|}\leq 1$, and 
$V_{12}$ is the geodesic disk with center at
\[
A_{12}=\dfrac{b_1m_2-b_2m_1}{|b_1m_2-b_2m_1|}
\]
and radius
\[
\tau_{12} =\cos^{-1} \dfrac{b_1 - b_2}{\kappa|b_1 m_2 - b_2 m_1|},
\]
that is,
\[
V_{12}=\left\{x\in S^2:x\cdot A_{12}\geq \cos \tau_{12}\right\}.
\]
In addition, if $\dfrac{b_1-b_2}{\kappa\,|b_1m_2-b_2m_1|}\leq -1$, then $V_{12}=S^2$.
If $\r(\b)$ is the refractor in $\Omega$ with polar radius $\rho(x)=\min\left\{\dfrac{b_1}{1-\kappa \,x\cdot m_1},\dfrac{b_2}{1-\kappa \,x\cdot m_2} \right\}$, then 
$\t_{\r(\b)}(m_1)\subset V_{12}$.
\end{lemma}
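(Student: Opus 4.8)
The plan is to analyze the defining inequality of $V_{12}$ directly and turn it into a linear inequality in $x$, which will reveal the geodesic disk structure. Starting from $\dfrac{b_1}{1-\kappa\,x\cdot m_1}\leq \dfrac{b_2}{1-\kappa\,x\cdot m_2}$, I would first note that the denominators $1-\kappa\,x\cdot m_i$ are positive for the relevant $x$ (since $|x\cdot m_i|\le 1$ and $\kappa<1$), so we may cross-multiply without reversing the inequality. This yields $b_1(1-\kappa\,x\cdot m_2)\leq b_2(1-\kappa\,x\cdot m_1)$, i.e. $b_1-b_2\leq \kappa\,x\cdot(b_1m_2-b_2m_1)$. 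Writing $w=b_1m_2-b_2m_1$ (which is nonzero precisely when $m_1\neq m_2$ — or more carefully, I should check that $b_1m_2=b_2m_1$ forces $m_1=\pm m_2$, and the $+$ case contradicts $m_1\ne m_2$ while the $-$ case is excluded since both $m_i\in S^2$ would need $b_1=-b_2<0$), the inequality becomes $x\cdot\dfrac{w}{|w|}\geq \dfrac{b_1-b_2}{\kappa|w|}$.

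From this reformulation the claims read off almost immediately. Set $A_{12}=w/|w|\in S^2$ and $c=\dfrac{b_1-b_2}{\kappa|w|}$. If $c>1$, no $x\in S^2$ satisfies $x\cdot A_{12}\geq c$, so $V_{12}=\emptyset$; conversely if $c\leq 1$ then $x=A_{12}$ satisfies it, so $V_{12}\neq\emptyset$ — this gives the nonemptiness criterion. When $-1\le c\le 1$ we can write $c=\cos\tau_{12}$ with $\tau_{12}=\cos^{-1}c\in[0,\pi]$, and then $V_{12}=\{x\in S^2: x\cdot A_{12}\geq\cos\tau_{12}\}$ is by definition the geodesic disk with center $A_{12}$ and radius $\tau_{12}$. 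If $c\leq -1$, every $x\in S^2$ has $x\cdot A_{12}\geq -1\geq c$, so $V_{12}=S^2$.

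For the last assertion, suppose $x_0\in\t_{\r(\b)}(m_1)$. If $x_0$ is not a singular point of $\r(\b)$, then by \cite[Lemma 5.1]{gutierrez-mawi:refractorwithlossofenergy} the ellipsoid $E(m_1,b_1)$ supports $\r(\b)$ at $x_0$, which forces $\dfrac{b_1}{1-\kappa\,x_0\cdot m_1}=\rho(x_0)\leq \dfrac{b_2}{1-\kappa\,x_0\cdot m_2}$, i.e. $x_0\in V_{12}$. Since the singular set has measure zero and $V_{12}$ is closed, and since $\t_{\r(\b)}(m_1)$ is contained in $\Omega\subset S^2$, the inclusion $\t_{\r(\b)}(m_1)\subset V_{12}$ holds (one can also observe that any $x_0\in\t_{\r(\b)}(m_1)$, singular or not, admits a supporting ellipsoid in direction $m_1$ with some radius $b$, and then uses $b\le b_1$ together with $\rho(x_0)\le \tfrac{b}{1-\kappa x_0\cdot m_1}\le\tfrac{b_1}{1-\kappa x_0\cdot m_1}$; but comparing with the $m_2$-ellipsoid this still only gives $\rho(x_0)\le\tfrac{b_2}{1-\kappa x_0\cdot m_2}$, hence $x_0\in V_{12}$ directly from $\rho(x_0)=\tfrac{b_1}{1-\kappa x_0\cdot m_1}$ whenever that equality holds, which is the non-singular case).

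The only genuinely non-routine point is the claim that $m_1\neq m_2$ guarantees $w=b_1m_2-b_2m_1\neq 0$: if $b_1m_2=b_2m_1$, taking norms gives $b_1=b_2$ (since $|m_i|=1$), hence $m_1=m_2$, contradiction — so this is a one-line argument. Everything else is elementary manipulation of the inequality together with the definition of geodesic disk and the already-cited supporting-ellipsoid fact. Thus I do not anticipate a real obstacle; the main care needed is bookkeeping of the three cases $c>1$, $-1\le c\le1$, $c\le-1$ and the sign of the denominators.
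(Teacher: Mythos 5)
Your proof is correct and follows essentially the same route as the paper: cross-multiplying the defining inequality (denominators positive since $\kappa<1$) to get the linear condition $x\cdot A_{12}\geq\dfrac{b_1-b_2}{\kappa|b_1m_2-b_2m_1|}$, from which the geodesic-disk description, the nonemptiness criterion, and the inclusion $\t_{\r(\b)}(m_1)\subset V_{12}$ all follow. If anything you are more complete than the paper's own terse argument, which only writes out the final inclusion and leaves the case analysis and the check that $b_1m_2-b_2m_1\neq 0$ implicit; your caveat about singular points in the last part is at the same level of rigor as the paper, which addresses that subtlety only later in Remark \ref{rmk:formulatauintermsofVj}.
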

\begin{proof}
If $\t_{\r(\b)}(m_1)=\emptyset$, there is nothing to prove.
Otherwise, let $x \in \t_{\r(\b)}(m_1)$. Then $\dfrac{b_1}{1 - \kappa m_1 \cdot x } \leq\dfrac{b_2}{1 - \kappa m_2 \cdot x}$. So $b_1 - b_2 \leq x \cdot \kappa (b_1m_2 - b_2m_1)$, and we obtain 
\[
x \cdot \dfrac {b_1m_2 - b_2m_1}{|b_1 m_2 - b_2 m_1|} \geq \frac{1}{\kappa}\dfrac{b_1-b_2}{|b_1 m_2 - b_2 m_1|},
\]
in particular, we must have $\dfrac{b_1-b_2}{\kappa\,|b_1 m_2 - b_2 m_1|}\leq 1$.
Thus $\t_{\r(\b)}(m_1)$ is contained in the geodesic disc with center at $\dfrac {b_1m_2 - b_2m_1}{|b_1 m_2 - b_2 m_1|}$ and geodesic radius $\cos^{-1} \left(\dfrac{1}{\kappa}\dfrac{b_1-b_2}{|b_1 m_2 - b_2 m_1|}\right).$
\end{proof}

\begin{remark}\label{rmk:formulatauintermsofVj}\rm
If $\r(\b)$ is a refractor of the form \eqref{refractordefinedbyfiniteellipsoids}, then 
\begin{equation}\label{eq:identityoutsidesingularpoints}
\t_{\r(\b)}(m_i)=\Omega\cap \cap_{j=1}^NV_{ij}, \,\text{  except possibly on the singular set of $\r(\b)$,}
\end{equation}
where $V_{ij}=\left\{x\in S^2:\dfrac{b_i}{1-\kappa\,x\cdot m_i}\leq \dfrac{b_j}{1-\kappa\,x\cdot m_j} \right\}$.
In fact, if $x_0\in \t_{\r(\b)}(m_i)$ and $x_0$ is not singular, then by \cite[Lemma 5.1]{gutierrez-mawi:refractorwithlossofenergy} the semi-ellipsoid $E(m_i,b_i)$ supports $\r(\b)$ at $x_0$ implying 
$\dfrac{b_i}{1-\kappa\,x_0\cdot m_i}\leq \dfrac{b_j}{1-\kappa\,x_0\cdot m_j}$ for all $j$.
Vice versa, if $x_0\in \Omega\cap \cap_{j=1}^NV_{ij}$, then the polar radius $\rho$ satisfies 
$\rho(x_0)=\dfrac{b_i}{1-\kappa\,x_0\cdot m_i}$, and so $\dfrac{b_i}{1-\kappa\,x\cdot m_i}$ supports $\rho$ at $x_0$.

The following example shows that in \eqref{eq:identityoutsidesingularpoints} it is necessary to remove the singular points. In fact, 
take two ellipsoids $E_1$ and $E_2$ with polar radii 
$\dfrac{b_1}{1-\kappa \, x\cdot m_1}$ and $\dfrac{b_2}{1-\kappa \,x\cdot m_2}$ respectively,
with $m_1\neq m_2$ and take the corresponding refractor minimum of the two ellipsoids and let $\rho(x)$ be the polar radius.
Suppose the refractor $\rho$ has a singular point $x_0$.
At $x_0$ take a supporting semi-ellipsoid to $\rho$ having axis $m_3$, with $m_3$ different from $m_1$ and $m_2$. Let the polar radius of this ellipsoid be $\dfrac{b_3}{1-\kappa x\cdot m_3}$.
Now take an ellipsoid $E$ of the form $\dfrac{b*}{1-\kappa x\cdot m_3}$ with $b*$ much larger than $b_3$ so that the ellipsoids $E_1$ and $E_2$ are contained in the interior of the solid $E$, that is, $\dfrac{b_i}{1-\kappa\, x\cdot m_i}$, $i=1,2$, are both strictly smaller than $\dfrac{b*}{1-\kappa x\cdot m_3}$.
Then refractor $\min\left\{\dfrac{b_1}{1-\kappa \, x\cdot m_1}, \dfrac{b_2}{1-\kappa \,x\cdot m_2},\dfrac{b*}{1-\kappa x\cdot m_3}\right\}$
is the same as the refractor $\rho(x)$.
We have that $x_0\in \t_\rho(m_3)$.
On the other hand, the sets 
$V_{31}=\left\{\dfrac{b*}{1-\kappa x\cdot m_3}\leq \dfrac{b_1}{1-\kappa \, x\cdot m_1}\right\}=\emptyset$ and 
$V_{32}=\left\{\dfrac{b*}{1-\kappa x\cdot m_3}\leq \dfrac{b_2}{1-\kappa \,x\cdot m_2}\right\}=\emptyset$.
\end{remark}
\vskip 0.2in

\section{The algorithm}\label{sec:algorithm}
We assume the energy conservation condition \eqref{energyconserved}.
\subsection{The set $W$ of admissible vectors}\label{sec:admissiblevectors}
Let $N\geq 2$, $f_o=\min_{1\leq i\leq N}f_i$, and $0<\delta<f_o/N$.
Consider the set of admissible vectors 
\[
W = \{\b = (1,b_2,\ldots, b_N): b_i > 0 \, \textrm {and} \, G_{\r(\b)}(m_i) \leq f_i + \delta \, \,  \textrm{for} \,  i = 2, \ldots, N\}.
\]
This set is non empty and their coordinates are bounded away from zero. This is the contents of the following lemma.
\begin{lemma}\label{lm:Wnonemptyandlowerbound}
Suppose $f_o=\min_{1\leq i\leq N}f_i$ and $0<\delta <f_o/N$.
We have that
\begin{enumerate}
\item[(1)] if $b_i>1+\kappa$ for $2\leq i\leq N$, then $(1,b_2,b_3,\cdots ,b_N)\in W$;
\item[(2)] 
if $\b=(1,b_2,\cdots ,b_N)\in W$, then
\begin{equation}\label{eq:lowerboundofbi}
\text{$b_i\geq \dfrac{1}{1+\kappa}$ for $2\leq i\leq N$.}
\end{equation}
\end{enumerate}
\end{lemma}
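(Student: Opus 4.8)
The plan is to prove the two claims using the monotonicity results from Lemma \ref{forlarge_b_energyis0} and Lemma \ref{monotonicityoftrace}, together with the energy conservation condition \eqref{energyconserved}.

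For claim (1), take $\b=(1,b_2,\dots,b_N)$ with $b_i>1+\kappa$ for all $2\leq i\leq N$. I would apply Lemma \ref{forlarge_b_energyis0}(i) to the index $i$, where $2\leq i\leq N$: since $b_1=1$ we have $b_i>1+\kappa=(1+\kappa)\cdot 1\geq (1+\kappa)\min_{j\neq i}b_j$, hence $G_{\r(\b)}(m_i)=0\leq f_i+\delta$. This holds for every $i\in\{2,\dots,N\}$, so $\b\in W$. (One should note $\min_{j\neq i}b_j\leq b_1=1$, which is exactly what makes the hypothesis of Lemma \ref{forlarge_b_energyis0}(i) hold.) This direction is essentially immediate.

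For claim (2), suppose $\b=(1,b_2,\dots,b_N)\in W$ and suppose for contradiction that $b_i<\frac{1}{1+\kappa}$ for some fixed $i\in\{2,\dots,N\}$. The idea is that making $b_i$ this small forces $\t_{\r(\b)}(m_i)$ to be all of $\bar\Omega$, hence $G_{\r(\b)}(m_i)=\int_\Omega g(x)\,dx$, which is too big. Concretely, since $b_i<\frac{1}{1+\kappa}\leq \frac{\min_{j\neq i}b_j'}{1+\kappa}$ would require $\min_{j\neq i}b_j\geq 1$ to invoke Lemma \ref{forlarge_b_energyis0}(ii) directly, but the other $b_j$ need not be $\geq 1$; so instead I would argue by comparison/monotonicity. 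Define $\b'$ by lowering every coordinate $b_j$ with $j\neq i$ down to the value $\min(b_j,1)$ — actually cleaner: let $\b'$ have $b_i'=b_i$ and $b_j'=\min_k b_k$ for $j\neq i$... Let me instead use Lemma \ref{monotonicityoftrace} repeatedly. Starting from $\b$, successively decrease each coordinate $b_j$, $j\neq i$, to $b_i(1+\kappa)>0$ wait — we want them \emph{large} relative to $b_i$. Increasing a coordinate $b_j$ ($j\neq i$) can only increase $G_{\r(\b)}(m_i)$ by Lemma \ref{monotonicityoftrace} (taking $l=j$: increasing $b_j$ decreases $G(m_j)$ and increases $G(m_i)$ for $i\neq j$). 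So replace each $b_j$, $j\neq i$, by $\max(b_j,\,b_i(1+\kappa)+\text{something})$; more simply, by $b_i(1+\kappa)$ if it is smaller, but we need strict inequality $b_i<\frac{\min_{j\neq i}b_j'}{1+\kappa}$, equivalently $\min_{j\neq i}b_j'>b_i(1+\kappa)$. Since $b_i<\frac{1}{1+\kappa}$, we have $b_i(1+\kappa)<1$, so choosing $b_j'=\max(b_j,1)\geq 1>b_i(1+\kappa)$ works and $b_j'\geq b_j$. By Lemma \ref{monotonicityoftrace} applied once for each such $j$, $G_{\r(\b')}(m_i)\geq G_{\r(\b)}(m_i)$. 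Now $\b'$ has $b_i'=b_i<\frac{1}{1+\kappa}\leq\frac{\min_{j\neq i}b_j'}{1+\kappa}$, so Lemma \ref{forlarge_b_energyis0}(ii) gives $G_{\r(\b')}(m_i)=\int_\Omega g(x)\,dx=f_1+\cdots+f_N$ by \eqref{energyconserved}. Hence $G_{\r(\b)}(m_i)\geq f_1+\cdots+f_N\geq f_i+\sum_{j\neq i}f_j\geq f_i+(N-1)f_o>f_i+\delta$, since $\delta<f_o/N<(N-1)f_o$ for $N\geq 2$. This contradicts $\b\in W$, proving \eqref{eq:lowerboundofbi}.

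The only mildly delicate point — the \textbf{main obstacle} — is claim (2): one cannot apply Lemma \ref{forlarge_b_energyis0}(ii) to $\b$ directly because the hypothesis there needs $b_i$ small relative to $\min_{j\neq i}b_j$, and the other coordinates of an admissible $\b$ are a priori not bounded below (that is what we are trying to prove!). The fix is to first enlarge the other coordinates using the monotonicity Lemma \ref{monotonicityoftrace}, which only increases $G_{\r(\b)}(m_i)$, and then apply Lemma \ref{forlarge_b_energyis0}(ii) to the enlarged vector. One should double-check that enlarging finitely many coordinates one at a time keeps each intermediate vector in $\R_+^N$ (it does, all values stay positive) so the monotonicity lemma applies at each step, and that the final comparison of constants $f_1+\cdots+f_N$ versus $f_i+\delta$ is strict, which follows from $0<\delta<f_o/N\leq f_o\leq \sum_{j\neq i}f_j$ when $N\geq 2$.
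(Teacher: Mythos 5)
Your part (1) is correct and amounts to the paper's own argument routed through Lemma \ref{forlarge_b_energyis0}(i): since $b_1=1$ forces $\min_{j\neq i}b_j\leq 1$, the hypothesis $b_i>(1+\kappa)\min_{j\neq i}b_j$ holds for each $i\geq 2$, so $G_{\r(\b)}(m_i)=0\leq f_i+\delta$.

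Part (2), however, contains a genuine error: the monotonicity of Lemma \ref{monotonicityoftrace} is used in the wrong direction at the decisive step. As you yourself state, enlarging a coordinate $b_j$ with $j\neq i$ can only \emph{increase} $G_{\r(\cdot)}(m_i)$; hence for your enlarged vector $\b'$ you obtain $G_{\r(\b)}(m_i)\leq G_{\r(\b')}(m_i)=\int_\Omega g(x)\,dx$, which is an upper bound on $G_{\r(\b)}(m_i)$, whereas your conclusion ``hence $G_{\r(\b)}(m_i)\geq f_1+\cdots+f_N$'' needs a lower bound. No contradiction is obtained. Moreover, the strategy cannot be repaired at the same index: the implication ``$b_i<1/(1+\kappa)$ implies $G_{\r(\b)}(m_i)>f_i+\delta$'' is false. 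For example, with $N=3$ and $\b=(1,c,c^2)$ for $c$ small, one has $b_2<1/(1+\kappa)$ yet $G_{\r(\b)}(m_2)=0$, because $E(m_3,c^2)$ lies strictly below the other two ellipsoids on $\Omega$; the admissibility constraint that fails here is the one at index $3$, not at index $2$. The paper's proof of (2) instead exploits index $1$: from the definition of $W$ and energy conservation, $G_{\r(\b)}(m_1)=f_1+\sum_{i= 2}^N\bigl(f_i-G_{\r(\b)}(m_i)\bigr)>f_1-(N-1)\delta>0$, so $\t_{\r(\b)}(m_1)$ has positive measure and one can pick $x_0\in\t_{\r(\b)}(m_1)$ belonging to no other $\t_{\r(\b)}(m_j)$ (otherwise every point of $\t_{\r(\b)}(m_1)$ would be singular). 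At such a point $\rho(x_0)=\dfrac{1}{1-\kappa\,x_0\cdot m_1}\leq\dfrac{b_i}{1-\kappa\,x_0\cdot m_i}$ for every $i$, which gives $b_i\geq\dfrac{1-\kappa\,x_0\cdot m_i}{1-\kappa\,x_0\cdot m_1}\geq\dfrac{1-\kappa}{1-\kappa^2}=\dfrac{1}{1+\kappa}$ for all $i$ simultaneously.
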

\begin{proof}
We prove (1).
Let $\b=(1,b_2,\cdots ,b_N)$ with $b_i>0$. Fix $j \geq 2$ and let $x\in \t_{\r(\b)}(m_j)$ be a non singular point. Then from \cite[Lemma 5.1]{gutierrez-mawi:refractorwithlossofenergy}, the semi-ellipsoid $E(m_j,b_j)$ supports $\r(\b)$ at $x$.
Since $x\cdot m_j\geq \kappa$, we have 
\[
\dfrac{b_j}{1-\kappa^2}\leq \dfrac{b_j}{1-\kappa \, x\cdot m_j}=\rho(x)\leq \dfrac{1}{1-\kappa \, x\cdot m_1}\leq \dfrac{1}{1-\kappa},
\]
and so $b_j\leq 1+\kappa$.
Therefore, if $b_j>1+\kappa$ with $j\neq 2$, and $x\in \t_{\r(\b)}(m_j)$, then $x$ is a singular point and therefore  
$\mathcal T_{\mathcal R}(m_j)$ has measure zero, and so
$G_{\mathcal R(b)}(m_j)=0<f_j+\delta$. 

To show (2), we first prove that $G_{\r(\b)}(m_1)>0$ for each $\b\in W$. 
In fact, from (\ref{energyconserved}) and the definition of $W$ we have
\[
G_{\r(\b)}(m_1) = f_1 +\sum_{i = 2}^N (f_i - G_{\r(\b)}(m_i)) > f_1 - (N-1)\delta > f_1 - N\delta>0
\]
from the choice of $\delta$.
Since $g\geq 0$, the set $\mathcal T_{\r(\b)}(m_1)$ has positive measure.
This implies that for each $\b\in W$, $\mathcal T_{\mathcal R(\b)}(m_1)\cap \left(\cup_{i=2}^N\mathcal T_{\mathcal R(\b)}(m_i)\right)^c\neq \emptyset$.
Otherwise, $\mathcal T_{\mathcal R(\b)}(m_1)\subset \cup_{i=2}^N\mathcal T_{\mathcal R(\b)}(m_i)$ which means that each point in $\mathcal T_{\mathcal R(\b)}(m_1)$ is singular, and therefore $|\mathcal T_{\mathcal R(\b)}(m_1)|=0$; a contradiction.
From this we conclude \eqref{eq:lowerboundofbi} because, if $\b\in W$, then we can pick $x_0\in \mathcal T_{\mathcal R(\b)}(m_1)\cap \left(\cup_{i=2}^N\mathcal T_{\mathcal R(\b)}(m_i)\right)^c$
and we have
\[
\rho(x_0)=\dfrac{1}{1-\kappa\,x_0\cdot m_1}\leq \dfrac{b_i}{1-\kappa\,x_0\cdot m_i},\qquad i=2,\cdots ,N\]
so
\[
b_i\geq \dfrac{1-\kappa\,x_0\cdot m_i}{1-\kappa\,x_0\cdot m_1}\geq \dfrac{1-\kappa\,x_0\cdot m_i}{1-\kappa^2}\geq 
\dfrac{1-\kappa}{1-\kappa^2}=\dfrac{1}{1+\kappa}.
\]
\end{proof}
\subsection{Detailed description of the algorithm}\label{subsect:detaileddescriptionofthealgorithm}
From Lemma \ref{lm:Wnonemptyandlowerbound} (2),
we can pick $\b^1=(1,b_2,\cdots ,b_N)\in W$. We will construct $N-1$ intermediate consecutive vectors $\b^2,\cdots ,\b^N$ associated with $\b^1$ in the following way.

{\bf Step 1.}
We first test if $\b^1$ satisfies the inequality:
\begin{equation}\label{eq:testinequality}
f_2-\delta \leq G_{\r(\b^1)}(m_2)\leq f_2+\delta.
\end{equation}
If $\b^1$ satisfies this inequality, then we set $\b^2=\b^1$ and we proceed to Step 2 below.
Notice that the inequality on the right hand side of \eqref{eq:testinequality} holds since $\b^1\in W$.
If $\b^1$ does not satisfy \eqref{eq:testinequality}, then
\begin{equation}\label{eq:Glessthanf2-delta}
G_{\r(\b^1)}(m_2)<f_2-\delta. 
\end{equation}
We shall pick $b_2^*\in (0,b_2)$, and leave all other components fixed, so that the new vector 
$\b^2=(1,b_2^*,b_3,\cdots ,b_N)$ belongs to $W$, and satisfies
\begin{equation}\label{eq:Glessthanf2-deltastronger}
f_2\leq G_{\r(\b^2)}(m_2)\leq f_2+\delta.
\end{equation}
In fact, this is possible because applying Lemma \ref{monotonicityoftrace} with $\ell=2$ we get that 
$G_{\r(\b^2)}(m_j)\leq G_{\r(\b^1)}(m_j)$ for $j\neq 2$ and $b_2^*\in (0,b_2]$ from \eqref{eq:inclusionwithinotell}; and applying Lemma \ref{forlarge_b_energyis0} (ii.) we get that $G_{\r(\b^2)}(m_2)\to \int_\Omega g(x)\,dx=f_1+\cdots +f_N$ as $b_2^*\to 0$, from the energy conservation assumption. 
Since the $f_i$'s are all positive, $f_1+\cdots +f_N>f_2$, and from the choice of $\delta$ we have $f_1+\cdots +f_N>f_2+\delta$. 
As a function of $b_2^*$, the function $G_{\r(\b^2)}(m_2)$ is non-increasing on $(0,b_2)$ from \eqref{eq:inclusionwithell}, tends to $f_1+\cdots +f_N$ as $b_2^*\to 0$, and from \eqref{eq:Glessthanf2-delta} is strictly less than $f_2-\delta$ at $b_2^*=b_2$.
Therefore by continuity of $G_{\r(\b^2)}(m_2)$, Lemma \ref{lm:continuity} ii, we can pick a value $b_2^*\in (0,b_2)$ such that \eqref{eq:Glessthanf2-deltastronger} holds\footnote{Notice that for any $a\in [f_2-\delta,f_1+\cdots ,f_N]$, we can pick $b_2^*\in (0,b_2)$ such that $G_{\r(\b^2)}(m_2)=a$.}.
Therefore, if the vector $\b^1$ does not satisfy \eqref{eq:testinequality}, we have then constructed a vector $\b^2\in W$ that satisfies \eqref{eq:Glessthanf2-deltastronger} which is stronger than \eqref{eq:testinequality}.
\newline
{\bf Step 2.}
Next we proceed to test the inequality 
\begin{equation}\label{eq:testinequality2}
f_3-\delta\leq G_{\r(\b^2)}(m_3)\leq f_3+\delta,
\end{equation}
with $\b^2$ the vector constructed in Step 1.
If $\b^2$ satisfies \eqref{eq:testinequality2}, we set $\b^3=\b^2$ and we proceed to the next step.
If $\b^2$ does not satisfy \eqref{eq:testinequality2}, then 
\[
G_{\r(\b^2)}(m_3)< f_3-\delta
\]
and we proceed as before, now to decrease the value of $b_3$, the third component of the vector $\b^2$, and construct a vector $\b^3\in W$ such that 
\[
f_3\leq G_{\r(\b^3)}(m_3)\leq f_3+\delta,
\]
and in particular,
\eqref{eq:testinequality2} holds for $\b^3$.
Notice that we do not know if the newly constructed vector $\b^3$ satisfies \eqref{eq:testinequality}.
\newline
{\bf Step 3.}
Next we proceed to test the inequality 
\begin{equation}\label{eq:testinequality3}
f_4-\delta\leq G_{\r(\b^3)}(m_4)\leq f_4+\delta,
\end{equation}
with $\b^3$ the vector from Step 2. If this is true, then we set $\b^4=\b^3$ and proceed to the next step. Otherwise, we must have 
\[
G_{\r(\b^3)}(m_4)<f_4-\delta
\]
 and we continue in the same way as before now decreasing the fourth component $b_4$ of $\b^3$ obtaining a new vector $\b^4$ satisfying
\[
f_4\leq G_{\r(\b^4)}(m_4)\leq f_4+\delta,
\]
in particular, \eqref{eq:testinequality3}.
\newline
{\bf Step $N-1$.}
We proceed to test the inequality 
\begin{equation}\label{eq:testinequalityN-1}
f_N-\delta \leq G_{\r(\b^{N-1})}(m_N)\leq f_N+\delta,
\end{equation}
where $\b^{N-1}$ is the vector from Step $N-2$. If this holds we set $\b^N=\b^{N-1}$. Otherwise,  we have
\[
G_{\r(\b^{N-1})}(m_N)<f_N-\delta,
\]
and proceeding as before, by decreasing the $N$th-component of $\b^{N-1}$, we obtain a vector $\b^N\in W$
\[
f_N\leq G_{\r(\b^N)}(m_N)\leq f_N+\delta.
\]
\newline
In this way, starting from a fixed vector $\b^1\in W$, we have constructed intermediate vectors $\b^2,\cdots ,\b^N$ all belonging to $W$ and satisfying the above inequalities.
Notice that by construction, the $\ell$-th components of $\b^{j-1}$ and $\b^j$ are all equal for $\ell\neq j$.
If for some $2\leq j\leq N$, $\b^{j-1}\neq \b^j$, then the $j$-th component of $\b^j$ is strictly less than the $j$-th component of $\b^{j-1}$. And so if we needed to decrease the $j$-th component of $\b^{j-1}$ to construct $\b^j$ is because 
\[
G_{\r(\b^{j-1})}(m_j)<f_j-\delta,
\]
and then by construction $\b^j$ satisfies  
\[
f_j\leq G_{\r(\b^j)}(m_j)\leq f_j+\delta.
\]
We therefore obtain from the last two inequalities the following important inequality
\begin{equation}\label{eq:inequalityfordifferenceoftwobjs}
\delta< G_{\r(\b^j)}(m_j)-G_{\r(\b^{j-1})}(m_j),\quad \text{for intermediate vectors $\b^j\neq \b^{j-1}$.}
\end{equation}

We now repeat the construction above starting with the last vector $\b_N$.
In fact, we start from a vector $\b^{1,1}\in W$ and constructed $N-1$ intermediate vectors $\b^{1,2},\cdots ,\b^{1,N}$ using the procedure described.
So we obtain in the first step the finite sequence of vectors 
\[
\b^{1,1},\b^{1,2},\cdots , \b^{1,N}.
\]
In the second step we repeat the construction now starting with the vector $\b^{1,N}$ and we get the finite sequence of vectors 
\[
\b^{2,1},\b^{2,2},\cdots , \b^{2,N}
\] 
with $\b^{2,1}=\b^{1,N}$.
For the third step we repeat the process now starting with the last intermediate vector $\b^{2,N}$ obtained in the previous step, obtaining the finite sequence of vectors 
\[
\b^{3,1},\b^{3,2},\cdots , \b^{3,N}
\]
with $\b^{3,1}=\b^{2,N}$.
Continuing in this way we obtain a sequence of vectors, in principle infinite,
\begin{equation}\label{eq:sequenceofvectorsintheiteration}
\b^{1,1},\cdots , \b^{1,N};\b^{2,1},\cdots ,\b^{2,N}; \b^{3,1},\cdots ,\b^{3,N};\cdots ;\b^{n,1},\cdots ,\b^{n,N};\b^{n+1,1},\cdots ,\b^{n+1,N};\cdots 
\end{equation}
with $\b^{2,1}=\b^{1,N},\b^{3,1}=\b^{2,N},\cdots ,\b^{n+1,1}=\b^{n,N},\cdots$.  
If for some $n$, the vectors in the $n$th-stage are equal, i.e., 
$\b^{n,1}=\b^{n,2}=\cdots =\b^{n,N}:=\b^n$, then 
from the construction
\[
|G_{\r(\b^n)}(m_j)- f_j|\leq \delta,\quad \text{for $2\leq j\leq N$.} 
\]
Furthermore, by conservation of energy,
$
\sum_{i=1}^N G_{\r(\b^n)}(m_i) = \sum_{i=1}^N f_i,
$
so we obtain
\begin{align*}
|f_1-G_{\r(\b^n)}(m_1)|
&= \left|\sum_{j=2}^NG_{\r(\b^n)}(m_j) - f_j\right| 
\leq \sum_{j=2}^N |G_{\r(\b^n)}(m_j) - f_j| \leq N\,\delta.
\end{align*}
If we now choose $\delta = \epsilon/N$, then the refractor $\r(\b^n)$ will satisfy \eqref{conditionfornumericalsolution}, and the problem is solved.
\newline
Therefore, if we show that for some $n$ the intermediate vectors $\b^{1,n},\b^{2,n},\cdots,\b^{n,N}$ are all equal, we are done.

\subsection{A Lipschitz estimate implies that the process stops}\label{sec:lipschitzestimateimpliesstop}
We shall prove that the estimate \eqref{eq:estimateofGiisimpler}
implies that there is an $n$ such that the vectors in the group $\b^{n,1},\b^{n,2},\cdots,\b^{n,N}$ are all equal, and we also show an upper bound for the number of iterations. 

Suppose we originate the iteration at $\b^0=(1,b_2^0,\cdots ,b_N^0)  \in W.$ 
Since by construction the coordinates of the vectors in the sequence \eqref{eq:sequenceofvectorsintheiteration} are decreased or kept constant, the $j$th coordinate of any vector in the sequence is less than or equal to $b_j^0$, $1\leq j\leq N$.
In addition, from \eqref{eq:lowerboundofbi}, points in $W$ have all their coordinates  bounded  below by $1/(1+\kappa)$. Therefore all terms in the sequence \eqref{eq:sequenceofvectorsintheiteration} are contained in the compact box $K=\{1\}\times \prod_{j=2}^N [1/(1+\kappa),b_j^0]$.
We want to show that there is $n_0$ such that the intermediate vectors $\b^{n_0,1},\b^{n_0,2},\cdots ,\b^{n_0,N}$ are all equal.
Otherwise, for each $n$ the intermediate vectors $\b^{n,1},\b^{n,2},\cdots ,\b^{n,N}$ are not all equal.
This implies that for each $n$ there are two consecutive intermediate vectors $(1,b_2,b_3,\cdots,b_N)$ and 
$(1,\bar b_2,\bar b_3,\cdots,\bar b_N)$, that are different. By construction of intermediate vectors, they can only differ in one coordinate, say that $b_j>\bar b_j$. Notice that $j$ depends on $n$, but there is $j$ and a subsequence $n_\ell$ such that there are two consecutive  intermediate vectors $(1,b_2^{n_\ell},b_3^{n_\ell},\cdots,b_N^{n_\ell})$ and 
$(1,\bar b_2^{n_\ell},\bar b_3^{n_\ell},\cdots,\bar b_N^{n_\ell})$ in each group 
$\b^{n_\ell,1},\cdots ,\b^{n_\ell,N}$ such that their $j$-th coordinates satisfy 
$b_j^{n_\ell}>\bar b_j^{n_\ell}$, and all other coordinates are equal. Also notice that since the coordinates are chosen in a decreasing form we have $b_j^{n_\ell}>\bar b_j^{n_\ell}\geq b_j^{n_{\ell+1}}>\bar b_j^{n_{\ell+1}}$ for $\ell=1,\cdots $.
From \eqref{eq:inequalityfordifferenceoftwobjs} we then get
\begin{equation}\label{eq:Lipschitzbounddelta}
\delta
<G_j\left(1,\bar b_2^{n_\ell},\bar b_3^{n_\ell},\cdots,\bar b_N^{n_\ell}\right)-G_j\left(1,b_2^{n_\ell},b_3^{n_\ell},\cdots,b_N^{n_\ell}\right)=(*)
\end{equation}
for each $\ell\geq 1$. 
We write
\[
(1,\bar b_2^{n_\ell},\bar b_3^{n_\ell},\cdots,\bar b_j^{n_\ell},\cdots \bar b_N^{n_\ell})
=
(1,\bar b_2^{n_\ell},\bar b_3^{n_\ell},\cdots,b_j^{n_\ell}+\bar b_j^{n_\ell}-b_j^{n_\ell},\cdots \bar b_N^{n_\ell}),
\]
and let $t:=\bar b_j^{n_\ell}-b_j^{n_\ell}<0$.
Since the vectors belong to $W$, we have $\bar b_j^{n_\ell}\geq 1/(1+\kappa)$. Then from \eqref{eq:estimateofGiisimpler}
we obtain
\begin{equation}\label{eq:boundswithL}
(*)\leq -\left(\bar b_j^{n_\ell}-b_j^{n_\ell}\right)\,
C_\kappa\,(\sup_\Omega g)\,  (N-1):=L\,(b_j^{n_\ell}-\bar b_j^{n_\ell}),\qquad \forall \ell.
\end{equation}
On the other hand,
\begin{equation}\label{eq:estimateofsumofdifferencesofcoordinates}
\sum_{\ell=1}^\infty ( b_j^{n_\ell}-\bar b_j^{n_\ell})\leq b_j^0-\dfrac{1}{1+\kappa},
\end{equation}
which contradicts \eqref{eq:Lipschitzbounddelta} and therefore the intermediate vectors $\b^{n_0,1},\b^{n_0,2},\cdots ,\b^{n_0,N}$ are all equal for some $n_0$.

Let us now estimate the number of iterations used.
Consider the sequence of vectors \eqref{eq:sequenceofvectorsintheiteration} constructed and list them 
as a sequence denoted by $v_i$, $i=1,2,3,\cdots $ and maintaining the given order.
By construction the $j$-th coordinate of the vector $v_i$ is greater than or equal than the $j$-th coordinate of the vector $v_{i+1}$, $1\leq j\leq N$.
Given $1\leq j\leq N$, if we let 
$
c_j(v_i)=\text{$j$-th coordinate of the vector $v_i$},
$
then $c_j(v_i)\geq c_j(v_{i+1})$; and any two consecutive vectors $v_i$ and $v_{i+1}$ can differ in only one
coordinate.
Let $\mathcal C_j=\{i:c_j(v_i)> c_j(v_{i+1})\}$; (notice that $\mathcal C_1=\emptyset$). 
If $i\in \mathcal C_j$, then from \eqref{eq:Lipschitzbounddelta} and \eqref{eq:boundswithL}
\[
c_j(v_i)- c_j(v_{i+1})\geq \dfrac{\delta}{L},
\]
and so adding over $i$ we get from \eqref{eq:estimateofsumofdifferencesofcoordinates} 
\[
\#( \mathcal C_j)\leq \dfrac{L}{\delta}\left( b_j^0-\dfrac{1}{1+\kappa}\right).
\]  
Now the set $\{i:v_i\neq v_{i+1}\}\subset \cup_{j=2}^N \mathcal C_j$, and therefore the sequence \eqref{eq:sequenceofvectorsintheiteration} is constant for all $n\geq n_0$ with 
\begin{equation}\label{eq:numberoftotaliterations}
n_0\leq N\,\dfrac{(1+\kappa) \,C_\kappa\,(\sup_\Omega g)\,  (N-1)}{\delta}\,\max_{2\leq j\leq N}\left( b_j^0-\dfrac{1}{1+\kappa}\right).
\end{equation}

\subsection{Limit as $n\to \infty$ of the sequence \eqref{eq:sequenceofvectorsintheiteration}}
We will show here that the procedure described always converges in an infinite number of steps, 
assuming only that $g\in L^1(\Omega)$ with $g$ not necessarily bounded.
This can be clearly seen by listing the vectors constructed in the following way:
\[
\text{group 1 }\left\{
\begin{matrix}
\b^{1,1} & \to & 1 & b_2^{1,1} & b_3^{1,1} & b_4^{1,1} & \cdots & b_N^{1,1}\\
& &  \verteq & \downineq & \verteq & \verteq & \cdots & \verteq\\
\b^{1,2} & \to & 1 & b_2^{1,2} & b_3^{1,2} & b_4^{1,2} & \cdots & b_N^{1,2}\\
& &  \verteq & \verteq & \downineq & \verteq & \cdots & \verteq\\
\b^{1,3} & \to & 1 & b_2^{1,3} & b_3^{1,3} & b_4^{1,3} & \cdots &  b_N^{1,3}\\
& &  \verteq & \shortparallel & \verteq & \downineq & \cdots & \verteq\\
\cdots \\
\b^{1,N-1} & \to & 1 & b_2^{1,N-1} & b_3^{1,N-1} & b_4^{1,N-1} & \cdots & b_N^{1,N-1}\\
& &  \verteq & \verteq & \verteq & \verteq & \cdots & \downineq \\
\b^{1,N} & \to & 1 & b_2^{1,N} & b_3^{1,N} & b_4^{1,N} & \cdots & b_N^{1,N}\\
& &  \verteq & \verteq & \verteq & \verteq & \cdots & \verteq\\
\end{matrix}
\right.
\]
\[
\text{group 2 }
\left\{
\begin{matrix}
\b^{2,1} & \to & 1 & b_2^{2,1} & b_3^{2,1} & b_4^{2,1} & \cdots & b_N^{2,1}\\
& &  \verteq & \downineq & \verteq & \verteq & \cdots & \verteq\\
\b^{2,2} & \to & 1 & b_2^{2,2} & b_3^{2,2} & b_4^{2,2} & \cdots & b_N^{2,2}\\
& &  \verteq & \verteq & \downineq & \verteq & \cdots & \verteq\\
\b^{2,3} & \to & 1 & b_2^{2,3} & b_3^{2,3} & b_4^{2,3} & \cdots &  b_N^{2,3}\\
& &  \verteq & \shortparallel & \verteq & \downineq & \cdots & \verteq\\
\cdots \\
\b^{2,N-1} & \to & 1 & b_2^{2,N-1} & b_3^{2,N-1} & b_4^{2,N-1} & \cdots & b_N^{1,N-1}\\
& &  \verteq & \verteq & \verteq & \verteq & \cdots & \downineq \\
\b^{2,N} & \to & 1 & b_2^{2,N} & b_3^{2,N} & b_4^{2,N} & \cdots & b_N^{2,N}\\
& &  \verteq & \verteq & \verteq & \verteq & \cdots & \verteq\\
\end{matrix}
\right.
\]
\[
\text{group 3 }
\left\{
\begin{matrix}
\b^{3,1} & \to & 1 & b_2^{3,1} & b_3^{3,1} & b_4^{3,1} & \cdots & b_N^{3,1}\\
& &  \verteq & \downineq & \verteq & \verteq & \cdots & \verteq\\
\b^{3,2} & \to & 1 & b_2^{3,2} & b_3^{3,2} & b_4^{3,2} & \cdots & b_N^{3,2}\\
& &  \verteq & \verteq & \downineq & \verteq & \cdots & \verteq\\
\b^{3,3} & \to & 1 & b_2^{3,3} & b_3^{3,3} & b_4^{3,3} & \cdots &  b_N^{3,3}\\
& &  \verteq & \shortparallel & \verteq & \downineq & \cdots & \verteq\\
\cdots \\
\b^{3,N-1} & \to & 1 & b_2^{3,N-1} & b_3^{3,N-1} & b_4^{3,N-1} & \cdots & b_N^{1,N-1}\\
& &  \verteq & \verteq & \verteq & \verteq & \cdots & \downineq \\
\b^{3,N} & \to & 1 & b_2^{3,N} & b_3^{3,N} & b_4^{3,N} & \cdots & b_N^{3,N}\\
& &  \verteq & \verteq & \verteq & \verteq & \cdots & \verteq\\
\end{matrix}
\right.
\]
and continuing in this way we get an infinite matrix having $N$ columns.
With the notation $b_k^{i,j}$ we have that $i$=group, $j $=vector in the group, and $k$= the component.
We have
\[
b_{j+1}^{i,j}\geq b_{j+1}^{i,j+1},\,\text{ for $j=1,\cdots ,N-1$, and $i=1,2,\cdots $}
\]
and
\[
b_\ell^{i,j}=b_\ell^{i,j+1},\,\text{ for $\ell\neq j+1$}.
\]
We now look at each of the $N$ columns of the infinite matrix above. Each column has entries in non increasing order (the first column is obviously one), therefore the limit of the entries exists and is a number different from zero because the vectors belong to $W$ and therefore each limiting coordinate is bigger than $1/(1+\kappa)$.
Let $b^\infty_j$ be the limit of the entries in the column $j$, $j\geq 2$. Then the vector 
\[
\b^\infty=(1,b^\infty_2,b^\infty_2,\cdots , b^\infty_N)
\]
satisfies 
\begin{equation}\label{eq:limitinginequalityforbinfty}
f_j-\delta \leq \int_{\t_{\r(\b^\infty)}(m_j)}g(x)\,dx\leq f_j+\delta, \,j=2,\cdots ,N.
\end{equation}
In fact, fix $2\leq j\leq N$, the vector $\b^\infty$ is the limit of the vectors 
$\b^{i,j}$ as $i\to \infty$. But the vectors $\b^{i,j}$ verify 
\[
f_j-\delta \leq \int_{\t_{\r(\b^{i,j})}(m_j)}g(x)\,dx\leq f_j+\delta, \text{ for $i=1,2,\cdots $}.
\]
Since the function $\int_{\t_{\r(\b)}(m_j)}g(x)\,dx$ is continuous as a function of $\b$ for each $j$, Lemma \ref{lm:continuity}ii, taking the limit as $i\to \infty$ we obtain \eqref{eq:limitinginequalityforbinfty}. As it was shown before, the validity of \eqref{eq:limitinginequalityforbinfty} for $j\neq 1$ implies that \eqref{eq:limitinginequalityforbinfty} holds with $j=1$ and with $\delta$ replaced by $N\delta$.

\section{A Lipschitz estimate of $G_i$}\label{sec:lipschitzestimates}
Consider the map $G:\R^N_+ \to \R^N_{\geq 0}$ given by
\begin{equation}\label{defn of G}
G: \b = (b_1, \ldots, b_N) \to (G_1(\b), \ldots, G_N(\b))
\end{equation} 
where 
\begin{equation}\label{defn of G_i}
G_j(\b) = G_{\r(\b)}(m_j)
\end{equation}
for $j = 1, \ldots, N$ and $\R^N_+ = \{\b = (b_1, \ldots, b_k) : b_j > 0 \, \textrm{for} \, j = 1, \ldots, N\}. $

Let $\e_i$ be the unit vector in $\R^N$ with $1$ at the $i$-th position.
We shall compute $G_i(\b^t) - G_i(\b)$ where $\b^t = (b_1^t, \ldots, b_N^t) := \b+t \,\e_i.$ 
From Remark \ref{rmk:formulatauintermsofVj}
\[
\t_{\r(\b)}(m_i) = \Omega\cap \bigcap_{j=1}^N V_j
\]
except possibly on a set of measure zero,  with
\begin{equation}\label{the set V_j}
V_j = \left\{x \in S^2: \dfrac{b_i}{1-\kappa m_i\cdot x} \leq \dfrac{b_j}{1-\kappa m_j\cdot x}\right\},
\end{equation}
where for brevity we have used the notation $V_j$ for $V_{ij}$.
Likewise
\[
\t_{\r(\b^t)}(m_i) = \Omega\cap \bigcap_{j=1}^N V_j^t.
\]
where 
\begin{equation}\label{the set V_jt}
V_j^t = \left\{x \in S^2: \dfrac{b_i^t}{1-\kappa m_i\cdot x} \leq \dfrac{b_j^t}{1-\kappa m_j\cdot x}\right\}.
\end{equation}
We have $V_j^t=V_j=S^2$ for $j=i$. 
So
\begin{equation}\label{eq:formulasfortauRjneqi}
\t_{\r(\b)}(m_i) = \Omega\cap \bigcap_{j\neq i} V_j,\qquad 
\t_{\r(\b^t)}(m_i) = \Omega\cap \bigcap_{j\neq i} V_j^t.
\end{equation}

We prove the following proposition needed to show in Section \ref{sec:lipschitzestimateimpliesstop} that the algorithm stops in a finite number of steps.
\begin{proposition}\label{prop:estimateofGiisimpler} 
If $g$ is bounded in $\Omega$, then 
\begin{align}\label{eq:estimateofGiisimpler}
0\leq G_i(\b+t\,\e_i) - G_i(\b)   
&\leq \left(\sup_\Omega g\right) \sum_{r\neq i}\dfrac{C(\kappa,m_i\cdot m_r)}{b_r}\,(-t),
\end{align} 
for $-b_i<t<0$ and for each $\b\in \R^N_+$, where the constant $C(\kappa,m_i\cdot m_r)$ depends only on $\kappa$ and the angle between $m_i$ and $m_r$.
\end{proposition}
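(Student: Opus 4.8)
The plan is to write $G_i(\b+t\,\e_i)-G_i(\b)$ as the $g$-mass of a symmetric difference of tracing sets, bound that symmetric difference by a sum of differences of \emph{geodesic disks} $V_j^t\setminus V_j$, and finally reduce each such difference to an elementary one-variable estimate, using Lemma \ref{lm:descriptionofgeodesicdisk} together with the spherical-cap area formula $2\pi(1-\cos\tau)$ for a disk of geodesic radius $\tau$.

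First write $\b^t=\b+t\,\e_i$ with $-b_i<t<0$, so $b_i^t:=b_i+t\in(0,b_i)$ and $b_j^t=b_j$ for $j\neq i$. Since $1-\kappa\,m_i\cdot x>0$ on $S^2$, the strict inequality $b_i^t<b_i$ forces $V_j\subseteq V_j^t$ for every $j\neq i$ (with $V_j,V_j^t$ as in \eqref{the set V_j}--\eqref{the set V_jt}): if $x\in V_j$ then $\frac{b_i^t}{1-\kappa m_i\cdot x}<\frac{b_i}{1-\kappa m_i\cdot x}\le\frac{b_j}{1-\kappa m_j\cdot x}$. By \eqref{eq:formulasfortauRjneqi} this gives $\t_{\r(\b)}(m_i)\subseteq\t_{\r(\b^t)}(m_i)$ up to a null set, so $g\ge0$ already yields the left inequality $0\le G_i(\b^t)-G_i(\b)$ (this also follows from Lemma \ref{monotonicityoftrace} with $\ell=i$), and, since $g$ is bounded,
\[
G_i(\b^t)-G_i(\b)=\int_{\t_{\r(\b^t)}(m_i)\setminus\t_{\r(\b)}(m_i)}g\,dx\le(\sup_\Omega g)\left|\bigcap_{j\neq i}V_j^t\setminus\bigcap_{j\neq i}V_j\right|.
\]
Using the set inclusion $\bigcap_{j\neq i}V_j^t\setminus\bigcap_{j\neq i}V_j\subseteq\bigcup_{j\neq i}(V_j^t\setminus V_j)$, the right-hand side is at most $(\sup_\Omega g)\sum_{j\neq i}|V_j^t\setminus V_j|$. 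Hence it suffices to prove, for each fixed $j\neq i$, that $|V_j^t\setminus V_j|\le \frac{C(\kappa,m_i\cdot m_j)}{b_j}\,(-t)$.

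Fix $j\neq i$, set $c=m_i\cdot m_j$ (so $c<1$ since the $m$'s are distinct), and for $s>0$ define
\[
\phi(s)=\frac{s-b_j}{\kappa\sqrt{s^2+b_j^2-2sb_jc}}\,,
\]
which is smooth because $s^2+b_j^2-2sb_jc=(s-b_jc)^2+b_j^2(1-c^2)\ge b_j^2(1-c^2)>0$. By Lemma \ref{lm:descriptionofgeodesicdisk}, the surface measure of $\{x\in S^2:\frac{s}{1-\kappa m_i\cdot x}\le\frac{b_j}{1-\kappa m_j\cdot x}\}$ equals $h(s):=2\pi\,\min\{2,\max\{0,1-\phi(s)\}\}$ (it is all of $S^2$ when $\phi(s)\le-1$, is empty when $\phi(s)\ge1$, and is a cap of radius $\cos^{-1}\phi(s)$ otherwise). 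Since $b_i^t<b_i$ gives $V_j\subseteq V_j^t$, and since $y\mapsto\min\{2,\max\{0,y\}\}$ is nondecreasing and $1$-Lipschitz,
\[
|V_j^t\setminus V_j|=h(b_i^t)-h(b_i)\le 2\pi\bigl(\phi(b_i)-\phi(b_i^t)\bigr)=2\pi\int_{b_i^t}^{b_i}\phi'(s)\,ds\le 2\pi\,(-t)\,\sup_{s>0}\phi'(s).
\]
A direct differentiation gives $\phi'(s)=\dfrac{b_j(1-c)(b_j+s)}{\kappa\,(s^2+b_j^2-2sb_jc)^{3/2}}$, and substituting $s=b_ju$ with $u>0$ turns this into $\phi'(s)=\dfrac1{b_j}\,\Phi(u)$, where $\Phi(u)=\dfrac{(1-c)(1+u)}{\kappa\,(u^2-2cu+1)^{3/2}}$. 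Since $u^2-2cu+1=(u-c)^2+(1-c^2)\ge1-c^2>0$, the function $\Phi$ is continuous on $[0,\infty)$ and $\Phi(u)=O(u^{-2})$ as $u\to\infty$, so it attains a finite maximum $C_0(\kappa,c)$ depending only on $\kappa$ and $c=m_i\cdot m_j$. Therefore $\sup_{s>0}\phi'(s)=C_0(\kappa,c)/b_j$, whence $|V_j^t\setminus V_j|\le 2\pi\,C_0(\kappa,m_i\cdot m_j)\,(-t)/b_j$; setting $C(\kappa,m_i\cdot m_j)=2\pi\,C_0(\kappa,m_i\cdot m_j)$ and summing over $j\neq i$ gives \eqref{eq:estimateofGiisimpler}.

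The point I expect to require the most care is the \emph{uniformity in $\b\in\R^N_+$}: a crude bound on $\phi'$ could blow up as $b_i\downarrow0$ or $b_i\uparrow\infty$, and it is exactly the homogeneity exploited by the substitution $s=b_ju$ that shows $\sup_{s>0}\phi'(s)$ is precisely of order $1/b_j$ with a constant independent of all the $b$'s. The possible degeneracies ($V_j$ or $V_j^t$ equal to $\emptyset$ or to $S^2$) cause no trouble, being absorbed by the clamping in $h$, which is $1$-Lipschitz; one only needs to remember that $m_i\neq m_j$ is used to ensure $c<1$ and hence the strict positivity of $s^2+b_j^2-2sb_jc$.
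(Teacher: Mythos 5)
Your proof is correct, and its skeleton coincides with the paper's: the monotonicity $V_j\subseteq V_j^t$ for $t<0$, the representation of $G_i(\b+t\,\e_i)-G_i(\b)$ as the $g$-mass of $\t_{\r(\b^t)}(m_i)\setminus\t_{\r(\b)}(m_i)$, and the inclusion of that set in $\bigcup_{j\neq i}\bigl(V_j^t\setminus V_j\bigr)$ are exactly the paper's opening moves. Where you genuinely diverge is in estimating $\mathrm{area}(V_j^t\setminus V_j)$. The paper sets up geodesic polar coordinates centered at the center $A_r$ of the cap $V_r$, derives the exact relation $\cos\tau_r-\cos h_r(\theta,t)=\dfrac{1-\kappa\,x_t\cdot m_r}{\kappa\,|b_im_r-b_rm_i|}\,(-t)$ along the boundary of $V_r^t$, and integrates $\sin s\,ds\,d\theta$; this forces a three-case discussion ($V_r=S^2$, $V_r\neq\emptyset$, $V_r=\emptyset$, the last needing a separate parametrization around the degenerate cap $V_r^{t_0}$). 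You instead invoke the closed-form cap area $2\pi(1-\cos\tau)$, observe that $\mathrm{area}(V_j^t)$ is $2\pi$ times a nondecreasing $1$-Lipschitz clamping of $1-\phi(s)$, where $\phi(s)=\cos\tau$ is viewed as a function of the first ellipsoid's parameter $s=b_i+t$, and bound the increment by $(-t)\sup_{s>0}\phi'(s)$; the clamping absorbs all degenerate configurations in one stroke, and the substitution $s=b_ju$ extracts the $1/b_j$ scaling uniformly in $\b$, which is indeed the delicate point. The derivative $\phi'$ you compute is precisely the $g'(t)$ of \eqref{def:functiongt} that the paper computes only to locate $t_0$ in its third case, so the two arguments are close cousins; yours is the more economical packaging, at the price of an abstract maximization of $\Phi$ where the paper has the explicit bound $\frac{1-\kappa\,x_t\cdot m_r}{\kappa}\leq\frac{1+\kappa}{\kappa}$ combined with its footnoted lower bound on $|b_im_r-b_rm_i|$. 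One pedantic point: your justification $u^2-2cu+1\ge 1-c^2>0$ as written needs $|c|<1$; for $c\le 0$ and $u\ge 0$ one has $u^2-2cu+1\ge 1$ directly, so the argument stands for all $c<1$, and in the paper's setting \eqref{geometricconstraintfork<1} in fact forces $c=m_i\cdot m_j\ge 2\kappa^2-1>-1$.
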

\begin{proof}
We have
\begin{align*}
 \text{$V_j^t\subset V_j$ for $t>0, j\neq i$ and $V_j\subset V_j^t$ for $t<0, j\neq i$,}
\end{align*}
so from \eqref{eq:formulasfortauRjneqi}
\begin{align*}
\t_{\r(\b^t)}(m_i)\subset \t_{\r(\b)}(m_i) \text{ for $t>0$}
\end{align*}
and
\begin{align*}
\t_{\r(\b)}(m_i)\subset \t_{\r(\b^t)}(m_i)  \text{ for $t<0$}.
\end{align*}
Since
\[
G_i(\b^t) - G_i(\b) = \int_{\t_{\r(\b^t)}(m_i)} g(x)dx - \int_{\t_{\r(\b)}(m_i)} g(x)dx,
\]
we obtain
\begin{equation*}
G_i(\b^t) - G_i(\b)
=
\begin{cases}
-\int_{\t_{\r(\b)}(m_i)\setminus \t_{\r(\b^t)}(m_i)} g(x)dx &\text{if $t>0$}\\
\int_{\t_{\r(\b^t)}(m_i)\setminus \t_{\r(\b)}(m_i)} g(x)dx &\text{if $t<0$.}
\end{cases}
\end{equation*}
If $t<0$, then we have
\begin{align*}
\t_{\r(\b^t)}(m_i)\setminus \t_{\r(\b)}(m_i)
&=
\Omega\cap \left\{ \left\{\cap_{j\neq i} V_j^t\right\}\setminus \left\{\cap_{r\neq i} V_r \right\}\right\}\\
&=
\Omega\cap \left\{\left\{ \cap_{j\neq i} V_j^t\right\}\cap 
\left\{\left( \cap_{r\neq i} V_r\right)^c \right\}\right\}\\
&=
\Omega\cap \left\{\left\{ \cap_{j\neq i} V_j^t\right\}\cap 
\left\{ \cup_{r\neq i} V_r^c \right\}\right\}\\
&= 
\Omega\cap \left\{ \cup_{r\neq i}\left\{V_r^c\cap \left\{ \cap_{j\neq i} V_j^t\right\} 
  \right\}\right\}\\
  &\subset \Omega\cap \left\{ \cup_{r\neq i}\left\{V_r^c\cap V_r^t
  \right\}\right\}\\
 &\subset  \cup_{r\neq i}\left(V_r^t\setminus V_r 
  \right) .
\end{align*}
On the other hand, if $t>0$, then
\[
\t_{\r(\b)}(m_i)\setminus \t_{\r(\b^t)}(m_i)\subset \cup_{r\neq i}\left(V_r\setminus V_r^t  
  \right). 
\] 
We will estimate for $-b_i<t<0$
\begin{align}\label{eq:estimateofGiwithsumofareas}
0\leq G_i(\b^t) - G_i(\b) &=\int_{\t_{\r(\b^t)}(m_i)\setminus \t_{\r(\b)}(m_i) } g(x)dx\leq   
\int_{\Omega\cap \left\{ \cup_{r\neq i}\left\{V_r^c\cap V_r^t
  \right\}\right\}}g(x)\,dx\notag\\
  &\leq \left(\sup_\Omega g\right) \text{area}\left(\cup_{r\neq i}\left(V_r^t\setminus V_r 
  \right) \right)\notag\\
  &\leq \left(\sup_\Omega g\right) \sum_{r\neq i}\text{area}\left(V_r^t\setminus V_r 
  \right).
\end{align}
We will calculate the area of $V_r^t\setminus V_r$ for $r\neq i$ and for $-b_i<t<0$.
\newline
{\bf Case $V_r= S^2$.} 
\newline
In this case, $V_r^t=S^2$ and so $\text{area}\left(V_r^t\setminus V_r 
  \right)=0$. 
\newline
{\bf Case $V_r\neq \emptyset$.} 
\newline
If $t\to -b_i$, then $V_r^t\to S^2$.
We will estimate the area measure of $V_r^t\setminus V_r$ when $-b_i<t<0$.  
The center of $V_r$ is the point $A_r=\dfrac{b_i m_r - b_r m_i}{|b_i m_r - b_r m_i|}$.
Fix an arbitrary vector $u$ from which we are going to measure the angles $\theta$. 
Given $0\leq \theta\leq 2\pi$ consider the points $\gamma_r(\theta,s)$ along the geodesic originating from $A_r$ and forming an angle $\theta$ with the vector $u$; $s$ denotes geodesic arc length. 
The point $\gamma_r(\theta,s)$ is on the boundary of $V_r$ if and only if the parameter $s = \tau_r = \cos^{-1} \left(\dfrac{b_i - b_r}{\kappa|b_i m_r - b_r m_i|}\right)$. 
Since $V_r\subset V_r^t$, and so the geodesic curve $\gamma_r(\theta,s)$ must intersect the boundary of $V_r^t$ for a unique value of $s$ with $s\geq \tau_r$.
Let us denote this value of $s$ by 
\[
h_r(\theta,t),
\]
and so 
\[
\gamma_r(\theta,s)\in \partial V_r^t \text{ if and only if } s=h_r(\theta,t).
\]
Let us set 
\[
x_t=\gamma_r(\theta,h_r(\theta,t)).
\]
Since $\gamma_r(\theta,s)$ is a geodesic curve from the point $A_r$ to the point $x_t$, 
we have
\begin{equation*}
h_r(\theta,t)=\arccos \left( A_r\cdot x_t\right).
\end{equation*}
On the other hand, the boundary of $V_r^t$ is the collection of points where the ellipsoids $E(m_i,b_i+t)$ and $E(m_r,b_r)$ intersect. So $x_t$ satisfies 
\begin{equation*}
\dfrac{b_i+t}{1 - \kappa x_t \cdot m_i} = \dfrac{b_r}{1 - \kappa x_t \cdot m_r},
\end{equation*}
which yields
\[
b_r(1-\kappa\,x_t\cdot m_i)=(b_i+t)(1-\kappa \,x_t\cdot m_r)
\]
which using the definition of $A_r$ yields
\[
A_r\cdot x_t=\dfrac{b_i-b_r}{\kappa \,|b_im_r-b_rm_i|}
+
\dfrac{1-\kappa\,x_t\cdot m_r}{\kappa \,|b_im_r-b_rm_i|}\,t
=
\cos s+\dfrac{1-\kappa\,x_t\cdot m_r}{\kappa \,|b_im_r-b_rm_i|}\,t,
\]
where in the last identity we used the definition of $s = \tau_r$.
We are now ready to calculate the surface area of $V_r^t\setminus V_r$.
Integrating in polar coordinates we obtain 
\begin{align}\label{eq:estimateofareasofgeodesicdisksr}
\text{area}(V_r^t\setminus V_r)
&= \int_0^{2\pi}\int_{\tau_r}^{h_r(\theta,t)}\sin s\,ds\,d\theta\notag\\
&=\int_0^{2\pi} \left(\cos \tau_r-\cos h_r(\theta,t)\right)\,d\theta
=(-t)\,\int_0^{2\pi} \dfrac{1-\kappa\,x_t\cdot m_r}{\kappa \,|b_im_r-b_rm_i|}\,d\theta\notag\\
&\leq (-t)\,2\pi\,\dfrac{1+\kappa}{\kappa}\,\dfrac{1}{|b_im_r-b_rm_i|}\leq C(\kappa,m_i\cdot m_r)\,\dfrac{1}{\max\{b_r,b_i\}}\,(-t),
\end{align}
\footnote{Since $m_i\neq m_r$ and have absolute value one, we have $m_i\cdot m_r\leq 1-\delta$ for some $1>\delta>0$. We then have 
$|b_im_r-b_rm_i|^2=b_r^2 - 2 b_rb_i m_r\cdot m_i + b_i^2\geq b_r^2 - 2 b_rb_i (1-\delta) + b_i^2= (b_r-(1-\delta)b_i)^2+b_i^2\delta(2-\delta)\geq b_i^2\delta(2-\delta)$.
Similarly, $|b_im_r-b_rm_i|^2\geq b_r^2\delta(2-\delta)$.}
for $-b_i<t<0$, where $C(\kappa,m_i\cdot m_r)$ is a positive constant depending only on $\kappa$ and the 
dot product $m_i\cdot m_r$.
\newline
{\bf Case when $V_r=\emptyset$}.
\newline
Let us recall that 
\[
V_r=\left\{x \in S^2: \dfrac{b_i}{1-\kappa m_i\cdot x} \leq \dfrac{b_r}{1-\kappa m_r\cdot x}\right\}
\] 
and
\[
V_r^t=\left\{x \in S^2: \dfrac{b_i+t}{1-\kappa m_i\cdot x} \leq \dfrac{b_r}{1-\kappa m_r\cdot x}\right\}.
\]
We have that 
\[
V_r^t=\left\{x\in S^2:x\cdot \dfrac{(b_i+t) m_r - b_r m_i}{|(b_i+t) m_r - b_r m_i|}\geq \dfrac{b_i+t - b_r}{\kappa \,|(b_i+t) m_r - b_r m_i |} \right\}
\]
Let
\begin{equation}\label{def:functiongt}
g(t)=\dfrac{b_i+t - b_r}{\kappa \,|(b_i+t) m_r - b_r m_i |}, \,t\in (-b_i,+\infty).
\end{equation}
Since $V_r=\emptyset$ we have
\[
g(0)>1.
\]
If we set $\Delta(t)=|(b_i+t) m_r - b_r m_i |$, then by calculation
\[
g'(t)=\dfrac{b_r(b_i+t+b_r)(1-m_i\cdot m_r)}{\kappa\,\Delta(t)^3},
\]
and therefore 
\[
g'(t)>0,\qquad \forall t\in (-b_i,+\infty).
\]
Also
\[
g(t)\to -\dfrac{1}{\kappa}, \text{ when $t\to -b_i$},
\]
and
\[
g(t)\to \dfrac{1}{\kappa}, \text{ when $t\to \infty$}.
\]
Therefore there is a unique number $-b_i<t_0<0$ such that $V_r^t=\emptyset $ for $t_0<t<0$ and 
$V_r^t\neq \emptyset $ for $-b_i<t\leq t_0$; this is the value for which $g(t_0)=1$. In particular, when $t=t_0$, the set $V_r^{t_0}$ consists only of the center point
$A_{r,t_0}=\dfrac{(b_i+t_0) m_r-b_r m_i}{|(b_i+t_0) m_r-b_r m_i|}$.
We need to calculate the area of $V_r^t\setminus V_r=V_r^t$ for $-b_i<t\leq t_0$.
To do this we will use the calculation from the previous case with $V_r\leadsto V_r^{t_0}$. 
In fact, we now parametrize the boundary of $V_r^t$ from the center of $V_r^{t_0}$, $A_{r,t_0}$.
Fix a arbitrary vector $u$ from which we are going to measure the angles $\theta$. 
Given $0\leq \theta\leq 2\pi$ consider the points $\gamma_r(\theta,s)$ along the geodesic originating from $A_{r,t_0}$ and forming an angle $\theta$ with the vector $u$; $s$ denotes geodesic arc length. 
The point $\gamma_r(\theta,s)$ is on the boundary of $V_r^{t_0}$ if and only if the parameter $s = \tau_{r,t_0} = \cos^{-1} \left(\dfrac{(b_i+t_0)-b_r}{\kappa\,|(b_i+t_0) m_r-b_r m_i|}\right)=0$. 
Since $t\leq t_0$, $V_r^{t_0}\subset V_r^t$, and so the geodesic curve $\gamma_r(\theta,s)$ must intersect the boundary of $V_r^t$ for a unique value of $s$ with $s> \tau_{r,t_0}=0$.
Let us denote this value of $s$ by 
\[
h_r(\theta,t),
\]
and so 
\[
\gamma_r(\theta,s)\in \partial V_r^t \text{ if and only if } s=h_r(\theta,t).
\]
Let us set 
\[
x_t=\gamma_r(\theta,h_r(\theta,t)).
\]
Since $\gamma_r(\theta,s)$ is a geodesic curve from the point $A_{r,t_0}$ to the point $x_t$, 
we have
\begin{equation*}
h_r(\theta,t)=\arccos \left( A_{r,t_0}\cdot x_t\right).
\end{equation*}
On the other hand, the boundary of $V_r^t$ is the collection of points where the ellipsoids $E(m_i,b_i+t)$ and $E(m_r,b_r)$ intersect. So $x_t$ satisfies 
\begin{equation*}
\dfrac{b_i+t}{1 - \kappa x_t \cdot m_i} = \dfrac{b_r}{1 - \kappa x_t \cdot m_r},
\end{equation*}
which yields
\[
b_r(1-\kappa\,x_t\cdot m_i)=(b_i+t)(1-\kappa \,x_t\cdot m_r)=(b_i+t_0)(1-\kappa \,x_t\cdot m_r)+
(t-t_0)(1-\kappa \,x_t\cdot m_r)\]
which using the definition of $A_{r,t_0}$ yields
\begin{align*}
A_{r,t_0}\cdot x_t&=\dfrac{(b_i+t_0)-b_r}{\kappa \,|(b_i+t_0) m_r-b_r m_i|}
+
\dfrac{1-\kappa\,x_t\cdot m_r}{\kappa \,|(b_i+t_0) m_r-b_r m_i|}\,(t-t_0)\\
&=
1+\dfrac{1-\kappa\,x_t\cdot m_r}{\kappa \,|(b_i+t_0) m_r-b_r m_i|}\,(t-t_0),
\end{align*}
where in the last identity we used that $\tau_{r,t_0}=0$.
Integrating in polar coordinates as before we obtain for $-b_i<t\leq t_0$ that
\begin{align*}
\text{area}(V_r^t)
&= \int_0^{2\pi}\int_{0}^{h_r(\theta,t)}\sin s\,ds\,d\theta\\
&=\int_0^{2\pi} \left(1-\cos h_r(\theta,t)\right)\,d\theta
=(t_0-t)\,\int_0^{2\pi} \dfrac{1-\kappa\,x_t\cdot m_r}{\kappa \,|(b_i+t_0) m_r-b_r m_i|}\,d\theta\\
&\leq (t_0-t)\,2\pi\,\dfrac{1+\kappa}{\kappa}\,\dfrac{1}{|(b_i+t_0) m_r-b_r m_i|}\\
&\leq C(\kappa,m_i\cdot m_r)\,\dfrac{1}{\max\{b_i+t_0,b_r\}}\,(t_0-t)\\
&\leq
C(\kappa,m_i\cdot m_r)\,\dfrac{1}{b_r}\,(t_0-t)\leq \dfrac{C(\kappa,m_i\cdot m_r)}{b_r}\,(-t), 
\end{align*}
since $t_0<0$, 
where $C(\kappa,m_i\cdot m_r)$ is a positive constant depending only on $\kappa$ and the 
dot product $m_i\cdot m_r$.
\newline
As a conclusion we obtain combining all cases the proposition.
\end{proof}

\begin{remark}\rm
Similar estimates for $G_i(\b)$ hold when the increment are in the variables $b_r$ with $r\neq i$.
In fact, using arguments similar to the ones used in the proof of the last proposition one can show that
\[
0\leq G_i(\b+t\,\e_r) - G_i(\b)  \leq C_\kappa \,\sup_\Omega g\,\dfrac{1}{\max\{b_i,b_r\}} \,t
\]
for all $0<t<\infty$ and for each $\b\in \R^N_+$, $r\neq i$.
Using these estimates for $r\neq i$, and the fact that in the far field the refractor measure is invariant by dilations, one can also obtain the estimate \eqref{eq:estimateofGiisimpler}.
\end{remark}

\section{Numerical analysis}\label{sec:numericalanalysis}
In order to see our algorithm in action, we implemented routines in the C/C++ programming language to produce some concrete numerical examples of refractors for a given
output image.
\footnote{All software used in our numerical investigation and graphical results can be
found at \href{http://helios.physics.howard.edu/~deleo/Refractor/}{http://helios.physics.howard.edu/~deleo/Refractor/}}.
We will assume that the function $g$ in Definition \ref{sol1} is constant.
\begin{figure}
  \centering
  \includegraphics[width=12cm]{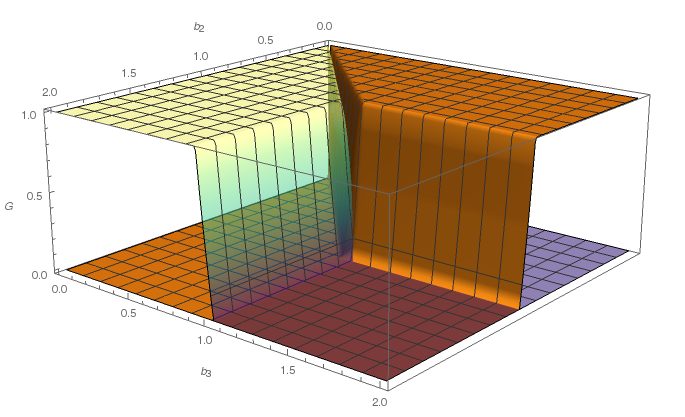}
  \caption{%
    \small
    Graph of the functions $G_2(\b)=G_{\r(\b)}(m_2)$ (semitransparent) and $G_3(\b)=G_{\r(\b)}(m_3)$ (opaque) in the $[0,2]^2$
    square for the case with three unit output directions $m_1,m_2,m_3$ given by the homogeneous coordinates $[0:0:1]$, $[0:1:5]$,
    and $[1:0:5]$, respectively.
}\label{fig:graph}
\end{figure}

Assuming $b_1=1$ and conservation of energy $G_1(1,b_2,\cdots ,b_N)+G_2(1,b_2,\cdots ,b_N)+\cdots +G_N(1,b_2,\cdots ,b_N)=$constant, 
we have from Remark \ref{rmk:Gconstantoutsideasetdefinedbylinearinequalities} that the map 
\begin{equation}\label{eq:mapfromN-1toN-1}
(b_2,\cdots ,b_N)\mapsto \left(G_2(1,b_2,\cdots ,b_N),\cdots ,G_N(1,b_2,\cdots ,b_N)\right)
\end{equation}
has a highly degenerate Jacobian in
a large region of the phase space (e.g. see Figure \ref{fig:graph}), that is, in the region $\cup_{i=1}^NF_i$ (with $b_1=1$). 
Notice that the vector $\b$ in \eqref{conditionfornumericalsolution} belongs to $\left(\cup_{i=1}^NF_i\right)^c$.
%
%

To evaluate numerically the output intensities $G_i(\b)=G_{\r(\b)}(m_i)$ for any fixed $\b=(1,b_2,\cdots ,b_N)$, we proceed as follows. We discretize $\Omega$ into a finite array 
of directions $A$. 
Fix a direction $\gamma \in A$ and considered the ray, denoted by $\ell_{\gamma}$, from 0 having direction $\gamma$. 
Now all the ellipsoids $E(m_j,b_j)$ intersect the ray $\ell_{\gamma}$ at some point $P(j,\gamma)$. Then there is a $j_\gamma$ such that the distance from $P(j,\gamma)$ to the origin is minimum, and we choose this ellipsoid. So for each $\gamma \in A$ we have an index $j_\gamma$ such that the ellipsoid $E(m_{j_\gamma},b_{j_\gamma})$ intersects the ray $\ell_{\gamma}$ at the point having minimum distance to the origin.
Since the refractor is by definition the minimum of the polar radii of ellipsoids, then, in the direction $\gamma$, the refractor refracts into the direction $m_{j_\gamma}$.
This way we have a map $T$ from each $\gamma \in A$ into a vector $m_{j_\gamma}$.
Clearly, this map $T$ might not cover all of the $m_1,...,m_N$.
We have 
\begin{equation}\label{eq:numericaldefinitionofGi}
G_i(\b)=\dfrac{\#\{\gamma\in A: T(\gamma)=m_i\}}{\#\{\gamma \in A\}}.
\end{equation}
To reduce computational time in the calculation of $G_i(\b)$, it is helpful not only to keep track of how many of the directions $\gamma$ get refracted in the direction $m_i$, 
but also to record $T(\gamma)$ at each $\gamma\in A$.
This is because in two consecutive steps of the algorithm described in Section \ref{sec:algorithm} we need to compute the values of  
$G_i(\b)$ and $G_i(\b')$, where $\b$ and $\b'$ are two vectors that differ only in one component.
In fact, suppose $\b$ and $\b'$ are successive values in the algorithm differing only in the $j_0$-th component, and we know $T(\gamma) = m_j$ relative to  $\b$. 
To evaluate $T(\gamma)$ relative to $\b'$ and subsequently obtaining the value of $G_i(\b')$, we only need to consider the ellipsoids $E(m_{j_\gamma},b_{j_\gamma})$ and $E(m_{j_0},b'_{j_0})$ and the distance from the origin to  $P(j, \gamma)$ and $P(j_0, \gamma).$\footnote{First notice that $T$ depends on $\b$. If $\b'$ and $\b$ are as in Lemma \ref{monotonicityoftrace}, then if $\gamma \in \t_{\b}(m_j)$, then 
$\gamma \in \t_{\b'}(m_\ell)$ or $\gamma \in \t_{\b'}(m_j)$ ($\gamma$ no singular). Because if 
$\gamma \not\in \t_{\b'}(m_\ell)$, then $\gamma \in \t_{\b'}(m_k)$ for some $k\neq \ell$. Then by \eqref{eq:inclusionwithinotell}, 
$\gamma\in \t_{\b}(m_k)$, and since $\gamma$ is not singular, we get $k=j$.} 
By doing so we cut the running time by a factor of $N$.

From \eqref{eq:numberoftotaliterations}, we know that the number of iterations needed to find the optimal vector $\b$, for which $err=\max_{2\leq i\leq N}|f_i-G_i(\b)|<\delta$, grows not faster than $N^2/\delta$. We expect it not to grow slower than this as well, so that we expect a theoretical computational time of order $O(N^2/\delta)$.
In addition, to use smaller values of $\delta$ requires increasing the value of $K$
and therefore increasing also the number of directions in $A$ to test (see the end of Section \ref{subsect:detaileddescriptionofthealgorithm}).
Indeed, for any given $A$, from \eqref{eq:numericaldefinitionofGi} the set of values that $G_i(\b)$ takes on is finite. Therefore for $\delta$ small enough there is $j_0$ such that we cannot find a value of $b_{j_0}$ for which $f_{j_0}< G_{j_0}(\b) < f_{j_0}+\delta$. This means that, if we want to find a $\b$ such that $err<\delta$, we need to increase the size of $A$ so that $\#A>1/\delta$. Since the loop on $A$ leads to a running time proportional to $\# A$, in our implementation we expect a computational time of order $O(N^2/\delta^2)$.

For the calculations here we choose $\Omega$ as the intersection of the upper semi sphere in $\R^3$ with 
the cone with vertex at the origin and generated by the vectors 
$(1,1,2),(-1,1,2),(-1,-1,2)$ and $(1,-1,2)$.
The set  $\Omega^*$ is the intersection of the upper semi sphere in $\R^3$ with 
the cone with vertex at the origin and generated by the vectors 
$(1,1,5),(-1,1,5),(-1,-1,5)$ and $(1,-1,5)$.
This choice of the domains $\Omega$ and $\Omega^*$ satisfy the condition~\eqref{geometricconstraintfork<1} avoiding total internal reflection
when $\kappa=1/2$.
Inside $\Omega^*$, we choose the refracted directions $\{m_i\}_{1\leq i\leq N}$, with $N=(n+1)^2$, as 
$$
\Omega^*_N=\left\{[r:r':5n]: r,r'=-n,-n+2,\dots,n-2,n;\text{ with $r,r'$ integers}\right\};
$$
where $[r:r':2n]$ denotes the unit vector in the direction $(r,r',2n)$.
We discretize $\Omega$ into $K=(2M+1)^2$ points having the form 
$$
\Omega_K=\left\{[r:r':2M]:-M\leq r,r'\leq M;\text{ with $r,r'$ integers}\right\}.
$$

We always start the algorithm in Section~\ref{sec:algorithm} with a vector in $W$, the set of admissible vectors, satisfying $b_1=1$ and $b_i=2$ for $i\geq 2$ to obtain a vector $\b$ satisfying \eqref{conditionfornumericalsolution}, with $\epsilon=1/10N$, and uniform output intensities $f_i=1/N$, $1\leq i \leq N$, for the directions $m_i\in\Omega^*_N$. 
That is, we stop our computations when
$$
\max_{1\leq i\leq N}\left|G_{\r(\b)}(m_i)-\dfrac{1}{N}\right|\leq\epsilon=\dfrac{1}{10N}.
$$
While implementing the algorithm for $1\leq n\leq 10$,
with $\delta=\epsilon/N=1/(10N^2)$, as in
Section~\ref{subsect:detaileddescriptionofthealgorithm}, 
our data in Fig.~\ref{fig:growth}a, show that the number of iterations $\nu$ grows roughly as $\nu(N)\simeq 0.3N^{2.8}$; although the exponent appears 
to slow down towards 2 as $N$ increases. 
This is consistent with~(\ref{eq:numberoftotaliterations}), according to which the growth cannot be faster than $N^4$. Similarly, for the running times 
$\tau$ we observe that $\tau(N)\simeq0.003 N^3$. Note that the evident jump in the running times when $n\geq 7$ 
is due to the fact that the values of $\delta$ for these cases get so small that for the algorithm to complete successfully it is necessary to use for these cases larger values of $K$ 
($M=200$ for $n$ up to 4, $M=250$ for $5\leq n\leq6$, $M=500$ for $n=7$, $M=350$ for $n=8$, $M=800$ for $n=9$ and $M=1100$ for $n=10$).
%
\begin{figure}
\centering
    \begin{tabular}{cc}
      \includegraphics[width=6.cm]{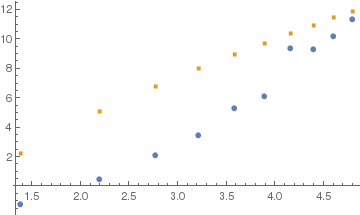}
      &
      \includegraphics[width=6.cm]{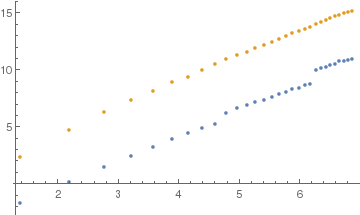}\cr
      \tiny a) Runtime/iteration time growth with full error&\tiny b) Runtime/iteration time growth with partial error\cr
      \includegraphics[width=5.cm]{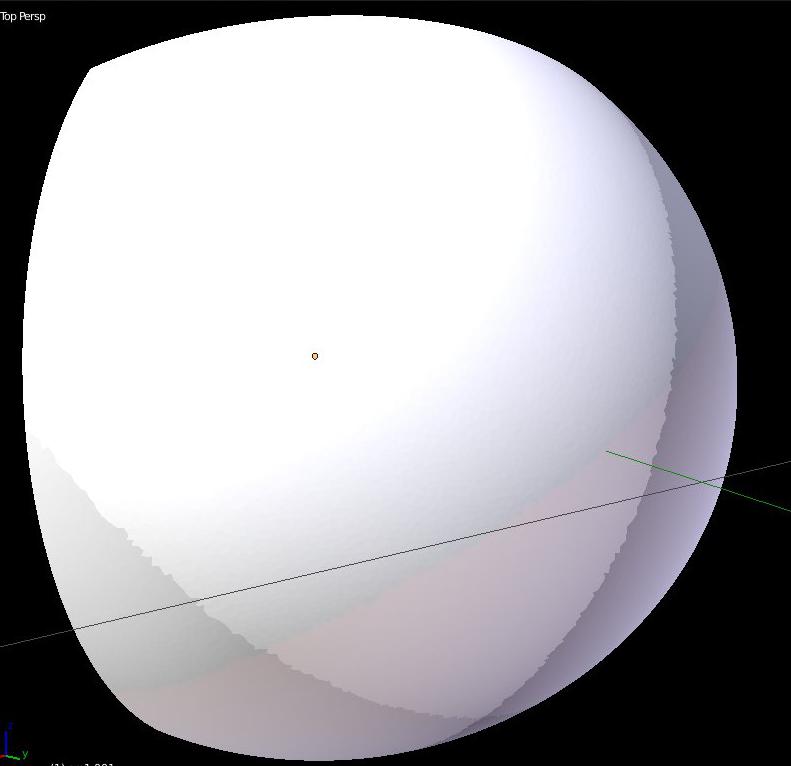}
      &
      \includegraphics[width=6.cm]{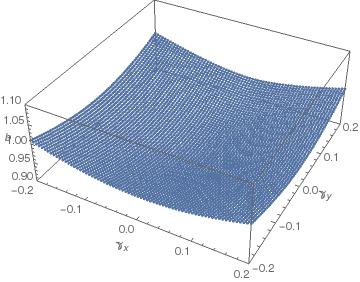}\cr
      \tiny c) Lens &\tiny d) Vector $\b$ for $N=5041$\cr
    \end{tabular}\\
  \caption{%
    \small
    a) Growth of runtime and number of iterations in our implementation of the algorithm of Section~\ref{subsect:detaileddescriptionofthealgorithm}
    when we minimize $|G_{\r(\b)}(m_i)-1/N|$ for {\sl all} $m_i$. Blue is $(\ln N, \ln \tau(N))$; orange is $(\ln N, \ln \nu(N))$. 
    b) Same plot when we disregard what happens in the direction $m_1$,
    as a function of the number of output directions. 
    c) Detail of the lens giving rise to Descartes' image with $N=71^2$ output directions. 
    d) 
    Plot of the components of the vector $\b$, considered as a map $\Omega^*_N\to\R$, when the refractor $\r(\b)$ gives the $71\times 71$ Descartes' picture in Figure \ref{fig:descartes} b. The set $\Omega^*_N$ is represented as the array of points $(r/5n,r'/5n)$, $r,r'=-n,-n+2,\dots,n$, inside the square $[-1/5,1/5]^2$.
}\label{fig:growth}
\end{figure}
Such a fast growth suggests that, although the algorithm in Section~\ref{sec:algorithm} always yields a solution $\b$ after a finite number of iterations, it might take a long computational time for large values of $n$.
For example for $n=30$, namely $N=961$, these data predict
a running time of at least 34 days.

The running time decreases considerably if we disregard the direction $m_1$.
In fact, in order to be able to use the algorithm in Section~\ref{subsect:detaileddescriptionofthealgorithm} with higher values of $N$, we disregard the intensity in the first refracted direction
$m_1$, namely, we stop our computations when 
$$
\max_{2\leq i\leq N}\left|G_{\r(\b)}(m_i)-\dfrac{1}{N}\right|\leq \epsilon=\dfrac{1}{10N}.
$$
As it is clear from the discussion in Section~\ref{subsect:detaileddescriptionofthealgorithm}, in order to achieve this result it is enough to 
take $\delta\leq \epsilon$. 
This way, omitting $m_1$, $\delta$ will decrease more slowly with $N$ and, accordingly, the number of iterations will grow slower with $N$ and with the 
size of the discretization of $\Omega$. Therefore, the running time will be shorter. In Fig.~\ref{fig:growth}b we show the growth of the number of iterations 
and running time when $1\leq n\leq30$, corresponding to $4\leq N\leq 961$.
In this case, the data show a growth in the number of iterations $\nu$ roughly quadratic in the number 
$N$ of the output directions: $\nu(N)\simeq 2.7N^{2.05}$. Similarly, for the running times $\tau$, we observe that $\tau(N)\simeq\alpha N^{1.9}$.
Here $\alpha$ depends on the value of $\delta$, and so from the duration of every single step in the program's loop to evaluate the map $T(\gamma)$,
the size of the discretization $\Omega_K$ (and therefore the number of steps in the loop above), as well as on non mathematical factors like
the hardware on which the program runs\footnote{All data in Fig.~\ref{fig:growth} and Fig~\ref{fig:descartes} are produced on an Intel Xeon 2.6GHz CPU} 
and the coding details of the algorithm implementation. For $1\leq n\leq9$ (see Fig.~\ref{fig:growth}a) we use
$\delta=10^{-3}$ and $M=200$ and find $\alpha\simeq 0.03$ seconds. For $n\geq 10$, the value $\delta=10^{-3}$ is not small enough
for the algorithm to reach a $10\%$ error and so we lower it to $\delta=2\cdot10^{-4}$. This change of course increases $\alpha$, leading to the visible jump in the (log-log) 
graph of $\tau(N)$. For $10\leq n\leq 22$ we find $\alpha\simeq0.05s$.
For $n\geq 23$, a discretization of $\Omega$ with $M=200$ is not fine enough to allow the evaluation 
of the map $T(\gamma)$. So we increase $M$ to 300, leading to a second visible jump in the graph corresponding to the larger value $\alpha\simeq 0.135$  secs.
For example, with this last value of $\alpha$, we get a lower bound of about 16 days for the running time in the case $N=5000$, i.e, $n\approx 70$. 

The results can be obtained faster combining this algorithm for small values of $n$ with a quasi-Newtonian
root-finding algorithm.  Such methods are generalizations of the Newton method to find the root of a function 
without an explicit expression of its Hessian. 
The problem is that, as for the Newton method, quasi-Newtonian methods require a starting point where the function has a non-degenerate Jacobian
and, as we already pointed out at the beginning of the section, the function \eqref{eq:mapfromN-1toN-1} has a degenerate Jacobian in a large portion of its domain.
We use the GNU Scientific Library (GSL) implementation of the quasi-Newtonian version of Powell's 
Hybrid method, since this method does not need an explicit Jacobian. 

Therefore, as a first step we use the algorithm from Section~\ref{subsect:detaileddescriptionofthealgorithm} (disregarding the direction $m_1$) to find a vector $\tilde \b$ 
inside
the region where the Jacobian is non-degenerate. And next use $\tilde \b$ as a starting point of the quasi-Newtonian algorithm to find a vector $\b^*$ for which
the output intensities $G_i(\b^*)$ are ``close enough'' to the $f_i=1/N$.
In fact, we start by evaluating a vector $\tilde\b=(\tilde b_1,\dots,\tilde b_{961})$ which gives {\it homogeneous light intensity ($f_i=1/N$) in all directions (except $m_1$)} 
 within $10\%$ for the output array $\Omega^*_{961}$, corresponding to $n=30$. This computation, with $\delta=10^{-4}$ and $M=300$, 
took about 15 hours. 
The vector $\tilde\b$ is then used as starting point by any quasi-Newtonian method to 
find the desired vector $\b^*=(b_1,\dots,b_{961})$ such that $\max_{1\leq i\leq 961}\left|G_{\r(\b)}(m_i)-1/N\right|<\epsilon$ over the array 
$\Omega^*_{961}$ and any (reasonable) $\epsilon$.
With this method, it takes only about 25 minutes to find, starting from the vector $\tilde\b$,
a vector $\b^*$ giving rise to a homogeneous distribution of light ($f_i=1/N$) in {\sl all} the directions of the array $\Omega^*_{961}$ within $10\%$! 


We now use the vector $\b^*$ as a pivot in a concrete case; 
namely, to produce a lens that yields an image of a famous portrait of Descartes by
Frans Hals  on the array of refracted directions $\Omega^*_{14641}$, corresponding to $n=120$. 
The images produced with the lens using LuxRender are shown in Fig.~\ref{fig:descartes} for various resolutions. 
First of all, we need to
extract from a digital version of the original picture the output intensities $f_i$, $1\leq i\leq 14641$.
For this purpose we use Imlib2, a general 
purpose open source C library aimed at images manipulation. 
Our final goal is finding a vector $\b$ so that the refractor $\r(\b)$ satisfies the inequalities
\begin{equation}\label{eq:obtainvectorberror}
\max_{1\leq i\leq14641}\left|G_{\r(\b)}(m_i)-f_i\right|\leq\min_{1\leq i\leq 14641}\{f_i\}/10.
\end{equation} 
Note that, since the naked eye cannot
usually detect nuances of black within a complex picture, and since for large arrays the amount of light in dark spots is very low, it is actually enough for 
us that the max and min in \eqref{eq:obtainvectorberror} are taken only over $i$ such that $f_i$ is sufficiently large when $N$ is large ($f_i$ small corresponds with dark spots).
Heuristically for this particular case we set this number to be $30\%$ of 
the total number of refracted directions. 
%
\begin{figure}  \centering
    \begin{tabular}{cc}
      \includegraphics[width=4.2cm]{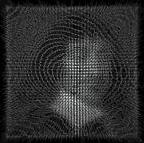}&\includegraphics[width=4.2cm]{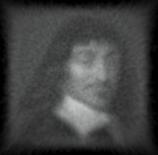}\cr
      \tiny a) Rendering (VTK 41x41)& \tiny b) Rendering (CGAL 71x71)\cr
      \includegraphics[width=4.2cm]{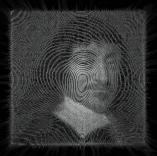}&\includegraphics[width=4.2cm]{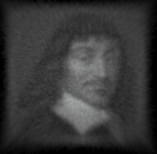}\cr
      \tiny c) Rendering (VTK 121x121)&\tiny d) Rendering (CGAL 121x121)\cr
    \end{tabular}\\
  \caption{%
    \small
    Rendering of Descartes' image from 3D models generated by the graphic libraries VTK and CGAL. The rendering 
    has been done via LuxRender, a physically accurate raytracer engine, through the modeling package Blender.
}\label{fig:descartes}
\end{figure}

Now we evaluate the coefficients
$f_i$ corresponding to Descartes' picture for the array $\Omega^*_{961}$. Next using $\b^*$, calculated in the first step, as a starting point in the quasi-Newtonian algorithm, we find the corresponding $\b$ giving rise to the $f_i$.
It takes 
about 23 minutes to get a $\b$ such that all $G_{\r(\b)}(m_i)$ are within $10\%$ from the $f_i$; all but one within 1\%, and 96\% of them are within .1\%.
At this point, we consider the array $\Omega^*_{1681}$, corresponding to $n=40$, evaluate the $f_i$'s corresponding to 
Descartes' picture on this array and use a standard interpolation algorithm (in concrete we use an implementation available in the GSL) 
to interpolate the values of $(b_1,\dots,b_{961})$ into a new vector $(\tilde b_1,\dots,\tilde b_{1681})$ and finally use this as starting point 
for the quasi-Newtonian code to find a vector $\b=(b_1,\dots,b_{1681})$ giving rise to the $f_i$, $1\leq i\leq 1681$, within $10\%$.
 It takes
about 28 minutes to find a $\b$ such that all $G_{\r(\b)}(m_i)$ but three are within $10\%$ from the corresponding $f_i$ (and $98\%$ of them is actually within $1\%$).
From this we move to the array $\Omega^*_{2601}$, interpolate the previous $\b=(b_1,\dots,b_{1681})$ to a new $\tilde\b=(b_1,\dots,b_{2601})$ and use it as a starting point
for the quasi-Newtonian algorithm, that in about 3 hours is able to find a $\b$ such that all $G_{\r(\b)}(m_i)$ but five are within $10\%$ from the corresponding $f_i$.
We continue with this process by increasing $n$ by 10 at every step until we arrive to $n=120$, which provides the final $\b$ (see Fig.~\ref{fig:descartes}c,d)
so that the $70\%$ of the $G_{\r(\b)}(m_i)$ are within $10\%$ from the corresponding $f_i$. The last computational step took about 2 days. The process can be continued to obtain higher resolution pictures.

\section{Conclusion}
We have obtained a numerical procedure to find far field refractors with arbitrary precision when the target is discrete composed of $N$ directions, and radiation emanates from one source point.
The density of the incoming radiation is assumed only bounded away from zero and infinity, and the domains $\Omega$ are general subsets of the unit sphere having boundary with surface measure zero.
The procedure converges in a finite number of steps and an estimate of this number is given in terms of $N$, the angles between the different directions in the target,  and the required approximation. To show the convergence we prove a Lipschitz estimate of the refractor map. 
A numerical implementation of the algorithm is carried out by using C/C++ programming language, and concrete examples of refractors for a given output image are provided.
The near field case can be treated with similar methods and we will return to this problem in the near future.

\providecommand{\bysame}{\leavevmode\hbox to3em{\hrulefill}\thinspace}
\providecommand{\MR}{\relax\ifhmode\unskip\space\fi MR }
\providecommand{\MRhref}[2]{%
  \href{http://www.ams.org/mathscinet-getitem?mr=#1}{#2}
}
\providecommand{\href}[2]{#2}


\begin{thebibliography}{BHP15b}

\bibitem[BHP15a]{platen:MAforREFRACTORSnumericalmethods}
K.~Brix, Y.~Hafizogullari, and A.~Platen, \emph{Designing illumination lenses
  and mirrors by the numerical solution of {M}onge-{A}mp\`ere equations}, Jour.
  Optical Soc. Amer. A, to appear; \url{http://arxiv.org/pdf/1506.07670.pdf},
  2015.

\bibitem[BHP15b]{MAforreflectorsnumericalmethods}
\bysame, \emph{Solving the {M}onge-{A}mpere equations for the inverse reflector
  problem}, Mathematical Models and Methods in Applied Sciences \textbf{25}
  (2015), no.~5, 803--837, \url{https://arxiv.org/pdf/1404.7821v2.pdf}.

\bibitem[BW59]{book:born-wolf}
M.~Born and E.~Wolf, \emph{Principles of optics, electromagnetic theory,
  propagation, interference and diffraction of light}, seventh (expanded), 2006
  ed., Cambridge University Press, 1959.

\bibitem[CKO99]{Caf-K-Oli:antenna}
L.~A. Caffarelli, S.~A. Kochengin, and V.~Oliker, \emph{On the numerical
  solution of the problem of reflector design with given far-field scattering
  data}, Contemporary Mathematics \textbf{226} (1999), 13--32.

\bibitem[CMT15]{Castro2015}
Pedro Machado~Manh{\~a}es Castro, Quentin M{\'e}rigot, and Boris Thibert,
  \emph{Far-field reflector problem and intersection of paraboloids},
  Numerische Mathematik (2015), 1--23.

\bibitem[GH09]{gutierrez-huang:farfieldrefractor}
C.~E. Guti\'errez and Qingbo Huang, \emph{The refractor problem in reshaping
  light beams}, Arch. Rational Mech. Anal. \textbf{193} (2009), no.~2,
  423--443.

\bibitem[GM13]{gutierrez-mawi:refractorwithlossofenergy}
C.~E. Guti\'errez and H.~Mawi, \emph{The far field refractor with loss of
  energy}, Nonlinear Analysis: Theory, Methods \& Applications \textbf{82}
  (2013), 12--46.

\bibitem[GS14]{gutierrez-sabra:thereflectorproblemandtheinversesquarelaw}
C.~E. Guti\'errez and A.~Sabra, \emph{The reflector problem and the inverse
  square law}, Nonlinear Analysis: Theory, Methods \& Applications \textbf{96}
  (2014), 109--133.

\bibitem[GT13]{gutierrez-tournier:parallelrefractor}
C.~E. Guti\'errez and F.~Tournier, \emph{The parallel refractor}, Development
  in Mathematics \textbf{28} (2013), 325--334.

\bibitem[GT15]{gutierrez-tournier:REGULARITYFORTHENEARFIELDPARALLELREFRACTORAN%
DREFLECTORPROBLEMS}
\bysame, \emph{Regularity for the near field parallel refractor and reflector
  problems}, Calc. Var. PDEs \textbf{54} (2015), no.~1, 917--949.

\bibitem[Gut14]{gutierrez:cimelectures}
C.~E. Guti\'errez, \emph{Refraction problems in geometric optics}, Lecture
  {N}otes in {Mathematics}, vol. 2087, Springer-Verlag, 2014, pp.~95--150.

\bibitem[Kit14]{kitagawa:iterativeschemeoptimaltransport}
Jun Kitagawa, \emph{An iterative scheme for solving the optimal transportation
  problem}, Calc. Var. PDEs \textbf{51} (2014), no.~1-2, 243--263.

\bibitem[MTW05]{MaTrudingerWang:regularityofpotentials}
Xi-Nan Ma, N.~Trudinger, and Xu-Jia Wang, \emph{Regularity of potential
  functions of the optimal transportation problem}, Arch. Rational Mech. Anal.
  \textbf{177} (2005), no.~2, 151--183.

\end{thebibliography}
\end{document}